\documentclass[12pt,a4paper]{article}
\usepackage{amsfonts,amsthm,amsmath,amssymb,amscd}
\usepackage{fancybox}
\usepackage[latin1]{inputenc}
\usepackage[usenames]{color}
\usepackage{graphicx}
\usepackage{enumerate}
\usepackage{float}
\usepackage{fullpage}
\usepackage{epsfig,amssymb} 
\usepackage{subcaption}

\newtheorem{theorem}{Theorem}[section]
\newtheorem{corollary}[theorem]{Corollary}
\newtheorem{lemma}[theorem]{Lemma}
\newtheorem{proposition}[theorem]{Proposition}

\newtheorem{rem}[theorem]{Remark}

\begin{document}
	
\title{\textbf{The Renormalized Area--Willmore Relation}}

\author{\'Alvaro P\'ampano and Aaron J. Tyrrell}
\date{\today}

\maketitle

\begin{figure}[H]
	\begin{center}
		\includegraphics[height=3.5cm]{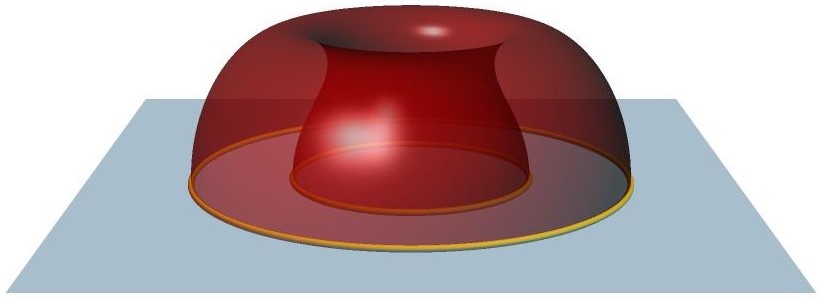}
	\end{center} 
\end{figure}

\begin{abstract} In \cite[Proposition 8.1]{AM}, Alexakis and Mazzeo showed that the renormalized area of minimal surfaces in the hyperbolic space $\mathbb{H}^3$ approaching the ideal boundary $\partial_\infty\mathbb{H}^3$ is the negative of the Willmore energy of their Euclidean conformal compactifications.
	
In this paper, we investigate whether there are other geometric surface equations for which these two quantities satisfy a universal proportionality relation. We answer this question in the affirmative by extending the Alexakis--Mazzeo result to a broader class of geometric equations containing, as two distinguished cases, the equations defining generalized singular minimal surfaces in $\mathbb{R}^3$ and linear Weingarten surfaces in $\mathbb{H}^3$.

Moreover, for rotational linear Weingarten surfaces, we express both energies in terms of generalized hyperelliptic integrals and show that their doubles form a family of rotational tori in $\mathbb{R}^3$ whose Willmore energies realize all values in $(2\pi^2,\infty)$, with a particular subfamily converging to the Clifford torus in $\mathcal{W}^{2,p}$ for every $1\leq p<\infty$, as the optimal value $2\pi^2$ is approached.\\

\noindent{\emph{Keywords:} Hyperbolic Space, Linear Weingarten Surfaces, Renormalized Area, Singular Minimal Surfaces, Willmore Energy.}\\
\noindent{\emph{MSC 2020:} 53C18, 53C42, 53A35.}
\end{abstract}

\section{Introduction}

The area of a properly embedded submanifold in the hyperbolic space diverges if the submanifold approaches the ideal boundary. Under suitable assumptions on its asymptotic behavior near the boundary, however, this divergence can be removed through Hadamard regularization. When the dimension of the submanifold is even, the constant term in the resulting asymptotic expansion defines a global invariant known as the renormalized area $\mathcal A_R$. This invariant originated in the study of Poincar\'e--Einstein spaces and is closely connected to the AdS/CFT correspondence in physics \cite{Gr,GW}.

The specific case of surfaces $M$ in $\mathbb{H}^3$ approaching the ideal boundary $\partial_\infty\mathbb{H}^3$ at a right angle was further investigated in the seminal paper \cite{AM}, where the Gauss--Bonnet-type formula
$$\mathcal{A}_R(M)=-2\pi\chi(M)-\frac{1}{2}\int_M \lvert\mathring{B}\rvert^2\,dA+\int_M H^2\,dA\,,$$
was established. This formula decomposes the renormalized area into three parts: a topological term determined by the Euler characteristic $\chi(M)$, a conformally invariant term involving the trace-free second fundamental form $\mathring B$, and an extrinsic geometric term corresponding to the hyperbolic bending energy, which is the integral of the square of the mean curvature $H$. Throughout this paper we follow the convention that $H$ is half the trace of the second fundamental form.

The upper half-space model of $\mathbb{H}^3$ provides a natural setting for translating the above hyperbolic problem into a Euclidean one. Indeed, under the standard conformal change of metric, a surface $M$ extending smoothly to $\partial_\infty\mathbb{H}^3=\{z=0\}$ may be viewed as a compact surface in the Euclidean upper half-space $\mathbb{R}^3_+=\mathbb{R}^3\cap\{z\geq 0\}$ whose boundary lies in the plane $\{z=0\}$. In particular, the condition that $M$ approaches $\partial_\infty\mathbb{H}^3$ at a right angle is equivalent to requiring that the corresponding Euclidean surface meets the plane $\{z=0\}$ orthogonally.

For such a surface, the Euclidean bending energy is
$$\mathcal{W}(M)=\int_M \overline{H}^2\,d\overline{A}\,,$$
where $\overline{H}$ denotes the Euclidean mean curvature of $M$. This energy differs from the conformally invariant Willmore functional \cite{Wi} by the total Gaussian curvature. For closed surfaces, the Gauss--Bonnet theorem shows that this difference is the topological term $2\pi\chi(M)$ and, in particular, both functionals agree on tori. Throughout the paper, $\mathcal{W}$ will simply be referred to as the Willmore energy.

A fundamental relation between the renormalized area $\mathcal{A}_R$ of minimal surfaces in $\mathbb H^3$ approaching $\partial_\infty\mathbb{H}^3$ and their Willmore energy $\mathcal{W}$, viewed as Euclidean surfaces in $\mathbb{R}^3_+$, was established by S. Alexakis and R. Mazzeo in \cite[Proposition 8.1]{AM}. They proved that $\mathcal{A}_R(M)=-\mathcal{W}(M)$ and noted that no analogous formula of comparable simplicity seems to exist for general non-minimal surfaces. The extent to which this phenomenon was particular to minimal surfaces remained unclear.

A decisive step toward understanding this phenomenon was obtained by B. Palmer and the first author in \cite[Theorem 3.1]{PP} (see also \cite[Theorem 4.2]{P} for a different derivation of the same identity). They showed that for every surface $M$ in $\mathbb{H}^3$ approaching $\partial_\infty\mathbb{H}^3$ at a right angle,
$$\mathcal{A}_R(M)=\int_M H^2\,dA-\int_M \overline{H}^2\,d\overline{A}\,.$$
In other words, the renormalized area is the difference between the hyperbolic and Euclidean bending energies.

The purpose of this paper is to determine whether the Alexakis--Mazzeo relation persists beyond the minimal setting. More precisely, the goal is to identify geometric surface equations for which there exists a constant $\lambda\in\mathbb R$, depending only on the equation, such that every solution satisfying the required asymptotic conditions obeys
$$\mathcal{A}_R(M)=\lambda\mathcal{W}(M)\,.$$
In view of the identity in \cite[Theorem 3.1]{PP}, the existence of such a universal proportionality relation is equivalent to the existence of a universal linear relation between the hyperbolic and Euclidean bending energies.

The main theorem shows that the relation between renormalized area and bending energies described above naturally singles out a broad class of geometric equations for which universal renormalized area--Willmore relations occur. Let $\kappa_1$ and $\kappa_2$ be the principal curvatures of $M$ in $\mathbb{H}^3$, $H=(\kappa_1+\kappa_2)/2$ denote the hyperbolic mean curvature of $M$, $R=\kappa_1\kappa_2$ its extrinsic scalar curvature, and $\nu$ the cosine of the hyperbolic angle between the unit normal to $M$ and the unit normal to the horizontal horospheres $\{z=z_o\in\mathbb{R}^+\}$. In Theorem \ref{main} we prove that every surface with $b\chi(M)=0$ satisfying a non-trivial equation of the form
$$a\,H^2+b\,R+c\left(H-\nu\right)^2=0\,,$$
necessarily has $a+b\neq 0$ and satisfies the universal relation
$$\mathcal{A}_R(M)=-\frac{a+c}{a+b}\,\mathcal{W}(M)\,.$$
When $b\neq 0$, the annular condition $\chi(M)=0$ arises solely from the Euler characteristic term in the Gauss--Bonnet type formula for $\mathcal{A}_R$, rather than from a local feature of the surface equation.

The proportionality relation is accompanied by a sharp rigidity phenomenon. Proposition \ref{p1} below shows that every identity $\mathcal{A}_R(M) = \lambda\mathcal{W}(M)$ must satisfy $\lambda\geq -1$, with equality if and only if $M$ is minimal in $\mathbb H^3$. Thus the Alexakis--Mazzeo identity is uniquely characterized by hyperbolic minimality. Conversely, every value $\lambda>-1$ arises from a one-parameter family of equations within the class identified in Theorem \ref{main}. To the best of our knowledge, these are the first non-minimal geometric surface equations known to impose a universal renormalized area--Willmore relation on all of their solutions meeting the ideal boundary at a right angle.

One distinguished example is the generalized singular minimal surface equation in $\mathbb R^3_+$, whose solutions are critical points of suitable weighted area functionals. After passage to the hyperbolic metric, this equation falls within the general framework above, yielding a universal renormalized area--Willmore relation for all solutions (see Corollary \ref{t2}). 

A second example is the linear Weingarten equation $\kappa_2=\mu\kappa_1$. Linear Weingarten annuli $M$ in $\mathbb{H}^3$ likewise satisfy a universal renormalized area--Willmore relation (see Corollary \ref{t3}). Together, these examples show that the proportionality principle arises naturally in geometrically unrelated settings.

Rotational non-totally geodesic linear Weingarten surfaces with $\mu<0$ are studied in greater detail. It was shown in \cite{Pa} that their generating curves are critical points of the $p$-elastic functional 
$$\mathbf{\Theta}_p(\gamma)=\int_\gamma \kappa^p\,ds, $$ 
with $p=1/(1-\mu)$. As a consequence of the geometric description of such curves established in Appendix A, these surfaces are properly embedded annuli approaching the ideal boundary at a right angle, and therefore have well defined renormalized area. Theorem \ref{prop} shows that both $\mathcal{A}_R$ and $\mathcal{W}$ are determined by the shifted functional $\mathbf{\Theta}_{p+1}$, providing a direct analogue of the classical relation between rotational Willmore tori and elastic curves established by J. Langer and D. Singer \cite{LS}. The $p$-elastic characterization further allows $\mathbf{\Theta}_{p+1}$, and hence both surface energies, to be expressed in terms of generalized hyperelliptic integrals, leading to a complete determination of their ranges (see Theorems \ref{t5} and \ref{cor}, respectively).

As an application, reflecting the linear Weingarten annuli across the plane $\{z=0\}$ produces a geometrically distinguished family of $\mathcal{C}^2$ rotational tori in $\mathbb R^3$ whose Willmore energies fill the interval $(2\pi^2,\infty)$. Moreover, the approach to the optimal value $2\pi^2$ is accompanied by global $\mathcal{W}^{2,p}$, for every $1\leq p<\infty$, convergence to the Clifford torus, with smooth convergence locally away from the reflection plane (see Theorem \ref{propnew}).

\section{Renormalized Area and Willmore Energy}

Consider the upper half-space model for the hyperbolic space $\mathbb{H}^3$ given by
$$\mathbb{H}^3=\{(x,y,z)\in\mathbb{R}^3\,\lvert\,z>0\}\,,$$
endowed with the metric
$$g=\frac{1}{z^2}\left(dx^2+dy^2+dz^2\right).$$

Let $M$ be a smooth connected surface properly embedded in $\mathbb{H}^3$, approaching the ideal boundary $\partial_\infty\mathbb{H}^3=\{z=0\}$ at a right angle, and which extends smoothly to a compact surface with boundary in the plane $\{z=0\}$. Unless otherwise stated, throughout this paper we shall assume that the surfaces under consideration satisfy the above hypotheses. Such a surface has infinite area but its Hadamard regularization is a global invariant of $M$. This invariant is defined as the constant term in the expansion of the area of truncated surfaces $M_\epsilon=M\cap\{z\geq \epsilon\}$ in terms of $\epsilon>0$ and is often referred to as the renormalized area \cite{Gr,GW}.

The renormalized area $\mathcal{A}_R$ of a surface $M$ in $\mathbb{H}^3$ approaching the ideal boundary $\partial_\infty\mathbb{H}^3$ at a right angle has the explicit Gauss--Bonnet type formula (first given in Proposition 3.1 of \cite{AM}; see also Corollary 4.4 of \cite{P})
\begin{equation}\label{ARgaussbonnet}
	\mathcal{A}_R(M)=-2\pi\chi(M)-\frac{1}{2}\int_M \lvert\mathring{B}\rvert^2\,dA+\int_M H^2\,dA\,,
\end{equation}
where $\chi(M)$ is the Euler characteristic of $M$, $\lvert\mathring{B}\rvert^2$ is the squared norm of the trace-free second fundamental form of $M$ in $\mathbb{H}^3$, $H$ is the mean curvature of $M$ (defined as half the trace of the second fundamental form $B$), and $dA$ represents the area element with respect to the induced (hyperbolic) metric on $M$.

Let $R$ denote the extrinsic scalar curvature as defined in \cite{C} (see also (1) of \cite{P}), that is,
\begin{equation}\label{R}
	R=\kappa_1\kappa_2\,,
\end{equation}
where $\kappa_1$ and $\kappa_2$ are the principal curvatures of $M$ (which are, in turn, the eigenvalues of the second fundamental form $B$). Then, from the standard relation (see Remark 3.1 of \cite{P} for details)
$$R=H^2-\frac{1}{2}\lvert\mathring{B}\rvert^2\,,$$
we can rewrite \eqref{ARgaussbonnet} simply as
\begin{equation}\label{ARhyperbolic}
	\mathcal{A}_R(M)=-2\pi\chi(M)+\int_M R\,dA\,.
\end{equation}

After the conformal change of metric $\overline{g}=z^2g$, the surface $M$ can be regarded as a smooth compact surface embedded in $\mathbb{R}^3_+=\mathbb{R}^3\cap\{z\geq 0\}$ whose boundary $\partial M$ lies in the plane $\{z=0\}$. The assumption that $M$ approaches $\partial_\infty\mathbb{H}^3$ at a right angle means that $M$ meets this plane orthogonally.

Let $\overline{H}$ denote the Euclidean mean curvature of $M$ and let $\overline{\xi}$ be the Euclidean unit normal vector field to $M$. The hyperbolic mean curvature $H$ is related to $\overline{H}$ by the identity
\begin{equation}\label{HH}
	H=z\overline{H}+\overline{\xi}_z\,,
\end{equation}
where $\overline{\xi}_z=\overline{g}(\overline{\xi},E_3)$ is the vertical component of $\overline{\xi}$ and $E_3$ denotes the Euclidean unit vector field in the direction of the $z$-axis. 

The Euclidean quantity $\overline{\xi}_z$ also has a natural hyperbolic interpretation. Indeed, the hyperbolic unit normal to $M$ is $\xi=z\overline{\xi}$, while $zE_3$ is the hyperbolic unit normal to the horizontal horospheres $\{z=z_o\in\mathbb{R}^+\}$. Consequently, 
$$g\left(\xi,zE_3\right)=g\left(z\overline{\xi},zE_3\right)=\overline{g}\left(\overline{\xi},E_3\right)=\overline{\xi}_z\,.$$
Thus, $\overline{\xi}_z$ is the cosine of the hyperbolic angle between the hyperbolic unit normal to $M$ and the hyperbolic unit normal to the horizontal horospheres. When working with $M$ as a surface in $\mathbb{H}^3$, we will denote $\overline{\xi}_z$ simply by $\nu$. In terms of this notation, the conformal transformation law \eqref{HH} becomes
\begin{equation}\label{HHnew}
	H=z\,\overline{H}+\nu\,.
\end{equation}

We define the Euclidean bending energy of the surface $M$ by
$$\mathcal{W}(M)=\int_M \overline{H}^2\,d\overline{A}\,,$$
where $d\overline{A}$ denotes the area element induced on $M$ by the Euclidean metric $\overline{g}$. For simplicity, we refer to $\mathcal{W}(M)$ as the Willmore energy\footnote{The classical definition of the (conformal) Willmore energy \cite{Wi} differs from $\mathcal{W}(M)$ by a total Gaussian curvature term, which can be reduced to a topological term after applying the Gauss--Bonnet theorem.} of the surface $M$ in $\mathbb{R}^3$. Notice that the Willmore energy is strictly positive for the surfaces considered in this paper. Indeed, if $\mathcal{W}(M)=0$, then $\overline{H}$ vanishes identically and $M$, regarded as a surface in $\mathbb{R}^3_+$, is minimal. Since $M$ meets the plane $\{z=0\}$ orthogonally, reflection across this plane would produce a smooth closed minimal surface in $\mathbb{R}^3$, which is not possible.

From \eqref{HHnew} and the relation $d\overline{A}=z^2dA$ between the Euclidean and hyperbolic area elements, we get that the Willmore energy can also be expressed entirely in terms of hyperbolic quantities as
\begin{equation}\label{Wh}
	\mathcal{W}(M)=\int_M \overline{H}^2\,d\overline{A}=\int_M \left(H-\nu\right)^2\,dA\,.
\end{equation}

The renormalized area $\mathcal{A}_R$ admits an alternative expression involving the (Euclidean) Willmore energy $\mathcal{W}$. More precisely, it was shown in Theorem 3.1 of \cite{PP} (see also Theorem 4.2 of \cite{P} for a different proof) that the renormalized area $\mathcal{A}_R$ of $M$ satisfies
\begin{equation}\label{difference}
	\mathcal{A}_R(M)=\int_M H^2\,dA-\int_M\overline{H}^2\,d\overline{A}=\int_M H^2\,dA-\mathcal{W}(M)\,.
\end{equation}
Thus, the renormalized area of $M$ is the difference between its hyperbolic and Euclidean bending energies. 

When $M$ is a minimal surface in $\mathbb{H}^3$, $H=0$ holds identically and formula \eqref{difference} reduces to
\begin{equation}\label{-1}
	\mathcal{A}_R(M)=-\mathcal{W}(M)\,.
\end{equation}
This relation was first obtained by S. Alexakis and R. Mazzeo in Proposition 8.1 of \cite{AM}\footnote{Actually, Proposition 8.1 of \cite{AM} computes the Willmore energy of the doubled closed surface, hence, the coefficient $1/2$ in that statement.}. They also observed that such a simple relation may not be expected to persist for non-minimal surfaces.

The following result shows that, in fact, the relation \eqref{-1} characterizes minimal surfaces in $\mathbb{H}^3$ among those approaching $\partial_\infty\mathbb{H}^3$ at a right angle. However, it also suggests considering other equally simple relations consisting of changing the proportionality constant in \eqref{-1}.

\begin{proposition}\label{p1} Let $M$ be a surface in $\mathbb{H}^3$ approaching the ideal boundary $\partial_\infty\mathbb{H}^3$ at a right angle. For a fixed $\lambda\in\mathbb{R}$, the identity
	$$\mathcal{A}_R(M)=\lambda\mathcal{W}(M)$$ 
holds if and only if
	$$\int_M H^2\,dA=\left(\lambda+1\right)\mathcal{W}(M)\,.$$
Moreover, $\lambda\geq -1$ with equality if and only if $M$ is minimal in $\mathbb{H}^3$.
\end{proposition}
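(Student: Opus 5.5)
The plan is to read everything off the identity \eqref{difference}, which expresses $\mathcal{A}_R(M)$ as $\int_M H^2\,dA-\mathcal{W}(M)$, together with the fact, already observed above, that $\mathcal{W}(M)>0$ for every surface under consideration.

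\textbf{Step 1: the equivalence.} Substituting \eqref{difference} into $\mathcal{A}_R(M)=\lambda\mathcal{W}(M)$ gives
$$\int_M H^2\,dA-\mathcal{W}(M)=\lambda\mathcal{W}(M).$$
This is the same as $\int_M H^2\,dA=(\lambda+1)\mathcal{W}(M)$. Each manipulation is reversible, so the two identities are equivalent.

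\textbf{Step 2: the bound $\lambda\geq -1$.} Suppose the identity holds. Then
$$(\lambda+1)\mathcal{W}(M)=\int_M H^2\,dA\geq 0.$$
Since $\mathcal{W}(M)>0$, dividing by it yields $\lambda+1\geq 0$.

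\textbf{Step 3: the equality case.} If $\lambda=-1$, then $\int_M H^2\,dA=0$. The integrand $H^2$ is continuous and nonnegative, and $dA$ is a positive smooth measure on the interior of $M$. Hence $H\equiv 0$, so $M$ is minimal in $\mathbb{H}^3$. Conversely, if $M$ is minimal, then $H\equiv 0$ and \eqref{difference} reduces to \eqref{-1}. That is, the identity holds with $\lambda=-1$.

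\textbf{Possible subtlety.} There is no real obstacle here. The one point to note is that $\int_M H^2\,dA$ is taken over a surface of infinite hyperbolic area. It is nevertheless finite, because $H=z\overline{H}+\nu$ is bounded and $H^2\,dA$ is integrable given the right-angle boundary behaviour; this finiteness is implicit in \eqref{difference}, so we may take it for granted.

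The only place positivity of $\mathcal{W}$ is needed is Step 2, and it is essential there. This positivity was justified earlier by the reflection argument, which rules out closed minimal surfaces in $\mathbb{R}^3$.
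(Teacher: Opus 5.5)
Your proposal is correct and follows the paper's proof essentially verbatim: substitute \eqref{difference}, use $\mathcal{W}(M)>0$ to get $\lambda\geq -1$, and read off minimality from $\int_M H^2\,dA=0$. One small precision for your side remark on finiteness: $\int_M H^2\,dA$ is finite because $\nu=\overline{\xi}_z$ vanishes on $\partial M$, so $H=O(z)$ and $H^2\,dA=H^2z^{-2}\,d\overline{A}$ is bounded; boundedness of $H$ alone would not suffice, but this does not affect the argument.
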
 
\begin{proof} 
Assume $\mathcal{A}_R(M)=\lambda\mathcal{W}(M)$ holds. From \eqref{difference}, we deduce that
	$$(\lambda+1)\,\mathcal{W}(M)=\lambda\mathcal{W}(M)+\mathcal{W}(M)=\mathcal{A}_R(M)+\mathcal{W}(M)=\int_M H^2\,dA\geq 0\,.$$
Since $\mathcal{W}(M)>0$, it follows that $\lambda\geq -1$. Moreover, equality holds if and only if $H$ vanishes identically.

Conversely, assume
$$\int_M H^2\,dA=(\lambda+1)\mathcal{W}(M)\,,$$
holds. Using \eqref{difference} to substitute the hyperbolic bending energy above, we obtain
$$(\lambda+1)\mathcal{W}(M)=\int_M H^2\,dA=\mathcal{A}_R(M)+\mathcal{W}(M)\,,$$
finishing the proof.
\end{proof}

Proposition \ref{p1} completely characterizes the case $\lambda=-1$. For other values of $\lambda>-1$, if this proportionality constant is allowed to vary with each individual surface $M$, then since $\mathcal{W}(M)>0$ one can simply take $\lambda=\mathcal{A}_R(M)/\mathcal{W}(M)$ and $\mathcal{A}_R(M)=\lambda\mathcal{W}(M)$ will always hold. Therefore, the meaningful question is whether a geometric surface equation prescribes the same proportionality constant $\lambda>-1$ for all its solutions approaching $\partial_\infty\mathbb{H}^3$ at a right angle.

The formulas \eqref{ARhyperbolic}, \eqref{Wh}, and \eqref{difference} above suggest a natural class of such equations, namely, a pointwise linear relation among the three quantities $H^2$, $R$, and $(H-\nu)^2$.

In the following theorem we show that every such linear relation induces a universal proportionality relation between $\mathcal{A}_R$ and $\mathcal{W}$.

\begin{theorem}\label{main}
	Let $M$ be a topological annulus in $\mathbb{H}^3$ approaching the ideal boundary $\partial_\infty\mathbb{H}^3$ at a right angle. If
	\begin{equation}\label{cond}
		aH^2+bR+c\left(H-\nu\right)^2=0\,,
	\end{equation}
	holds for constants $a,b,c\in\mathbb{R}$, not all zero, then $a+b\neq 0$ and
	$$\mathcal{A}_R(M)=-\frac{a+c}{a+b}\,\mathcal{W}(M)\,.$$
	If $b=0$, the same conclusion holds without any restriction on the topology of the surface $M$.
\end{theorem}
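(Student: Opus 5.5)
Integrate the pointwise relation \eqref{cond} over $M$ and translate each of the three integrals into the two global quantities $\mathcal{A}_R(M)$ and $\mathcal{W}(M)$. Write $\mathcal{H}=\int_M H^2\,dA$. By \eqref{difference}, $\mathcal{H}=\mathcal{A}_R(M)+\mathcal{W}(M)$. By \eqref{ARhyperbolic}, $\int_M R\,dA=\mathcal{A}_R(M)+2\pi\chi(M)$, and this equals $\mathcal{A}_R(M)$ because $M$ is an annulus. By \eqref{Wh}, $\int_M (H-\nu)^2\,dA=\mathcal{W}(M)$.

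One point needs care before integrating: each integral must be finite. $\mathcal{W}(M)$ is finite because $M$ is compact in $\mathbb{R}^3_+$. Then $\mathcal{H}$ is finite by \eqref{difference}, and $\int_M R\,dA$ is finite by \eqref{ARhyperbolic}. These are exactly the facts established in the cited works under the standing hypotheses. So integrating \eqref{cond} is legitimate, and
$$a\bigl(\mathcal{A}_R(M)+\mathcal{W}(M)\bigr)+b\,\mathcal{A}_R(M)+c\,\mathcal{W}(M)=0,$$
that is, $(a+b)\mathcal{A}_R(M)=-(a+c)\mathcal{W}(M)$.

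The main step is to show $a+b\neq0$. Suppose $a+b=0$. The identity then gives $(a+c)\mathcal{W}(M)=0$, and since $\mathcal{W}(M)>0$ this forces $c=-a$ and $b=-a$, with $a\neq0$ because the constants are not all zero. Dividing by $a$, \eqref{cond} becomes $H^2-R-(H-\nu)^2=0$. Since $H^2-R=\tfrac12|\mathring{B}|^2=\tfrac14(\kappa_1-\kappa_2)^2$, this reads
$$\tfrac14(\kappa_1-\kappa_2)^2=(H-\nu)^2=z^2\overline{H}^2.$$
I would rule this out using the boundary behaviour. At $\partial M$ the surface meets $\{z=0\}$ orthogonally, so $\nu=0$ there and $H=z\overline{H}+\nu\to0$. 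The umbilicity defect is conformally invariant: $|\mathring{B}|^2dA=|\mathring{\overline{B}}|^2d\overline{A}$, so pointwise $(\kappa_1-\kappa_2)^2=z^2(\overline{\kappa}_1-\overline{\kappa}_2)^2$. The equation therefore becomes the Euclidean condition $\tfrac14(\overline{\kappa}_1-\overline{\kappa}_2)^2=\overline{H}^2$, i.e. $\overline{\kappa}_1\overline{\kappa}_2=0$: the Euclidean Gauss curvature vanishes identically.

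So in the case $a+b=0$, $M$ is a compact flat surface in $\mathbb{R}^3_+$ meeting the plane orthogonally. Reflecting it across $\{z=0\}$ gives a closed $\mathcal{C}^2$ (in fact $\mathcal{C}^{1,1}$ at least) flat surface in $\mathbb{R}^3$. This is impossible, since a compact surface in $\mathbb{R}^3$ has a point of positive Gauss curvature, namely a point of contact with the smallest enclosing sphere, and such a point lies off the reflection plane or can be taken from the interior of $M$. I expect this step to be the main obstacle. The key identity $H^2-R-(H-\nu)^2=z^2\overline{K}$ should follow from $\overline{K}\,d\overline{A}$ versus $R\,dA$ via \eqref{HHnew}; I would verify it carefully rather than rely on the heuristic above. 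Once $a+b\neq0$ is established, dividing gives $\mathcal{A}_R(M)=-\frac{a+c}{a+b}\mathcal{W}(M)$.

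For $b=0$ the term $\int_M R\,dA$ never appears, so $\chi(M)$ is never used. The integrated identity becomes $a\mathcal{A}_R(M)=-(a+c)\mathcal{W}(M)$. If $a=0$, then $c\mathcal{W}(M)=0$ with $c\neq0$, which contradicts $\mathcal{W}(M)>0$. Hence $a\neq0$ and the same conclusion holds for arbitrary topology.
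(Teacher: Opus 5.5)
Your proposal is correct and follows the paper's proof essentially step by step: you integrate the pointwise relation, convert the three integrals via the Gauss--Bonnet formula for $\mathcal{A}_R$, the hyperbolic expression of $\mathcal{W}$ and the difference formula, and then exclude $a+b=0$ by reducing to $H^2-R-(H-\nu)^2=-z^2\overline{K}=0$ and reflecting to a closed flat $\mathcal{C}^2$ surface. Two small points: the sign in that identity is minus, not plus as you wrote, which is harmless here; and you do not need the elliptic point to avoid $\{z=0\}$, because, as in the paper, $\overline{K}=0$ extends to $\partial M$ by continuity, so the reflected $\mathcal{C}^2$ surface is flat everywhere.
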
 
\begin{proof}
Assume that the surface $M$ satisfies \eqref{cond}. Integrating this equation over $M$ with respect to the hyperbolic area element and using \eqref{ARhyperbolic}, \eqref{Wh}, and \eqref{difference}, we obtain
\begin{eqnarray*}
	0&=&a\int_M H^2\,dA+b\int_M R\,dA+c\int_M \left(H-\nu\right)^2\,dA\\
	&=&a\left(\mathcal{A}_R(M)+\mathcal{W}(M)\right)+b\left(2\pi\chi(M)+\mathcal{A}_R(M)\right)+c\,\mathcal{W}(M)\\
	&=&(a+b)\mathcal{A}_R(M)+(a+c)\mathcal{W}(M)+2\pi b\chi(M)\,.
\end{eqnarray*}
If $M$ is a topological annulus, then $\chi(M)=0$. The last term also vanishes, regardless of the topology of $M$, if $b=0$. Consequently, in either case,
$$(a+b)\mathcal{A}_R(M)+(a+c)\mathcal{W}(M)=0\,.$$
If $a+b\neq 0$, the result follows.

It remains to show that $a+b\neq 0$. Suppose, to the contrary, that $a+b=0$. Since $\mathcal{W}(M)>0$, the above linear relation implies that $a+c=0$. If $b=0$, then $a=c=0$, contradicting the assumption that the coefficients are not all zero. Hence, $b=-a\neq 0$ and $c=-a$. After dividing equation \eqref{cond} by $a$, we obtain
$$H^2-R-\left(H-\nu\right)^2=0\,.$$
By the conformal transformation law (see (11) of \cite{P} and \cite[Theorem 1]{C}) and \eqref{HHnew},
$$0=H^2-R-\left(H-\nu\right)^2=z^2(\,\overline{H}^2-\overline{R}\,)-z^2\overline{H}^2=-z^2\overline{R}\,,$$
where $\overline{R}=\overline{K}$ is the Euclidean Gaussian curvature of $M$. Since $z>0$ in the interior of $M$, it follows that $\overline{K}=0$ there and, by continuity, also on $\partial M$. Moreover, since $M$ meets the plane $\{z=0\}$ orthogonally, reflection across this plane produces a closed embedded $\mathcal{C}^2$ surface in $\mathbb{R}^3$ whose Gaussian curvature vanishes identically. This is impossible, since every closed embedded $\mathcal{C}^2$ surface in $\mathbb{R}^3$ has an elliptic point. Therefore, $a+b\neq 0$.
\end{proof}

\begin{rem} When $b\chi(M)=0$, Theorem \ref{main} states that for every surface $M$ in $\mathbb{H}^3$ approaching $\partial_\infty\mathbb{H}^3$ at a right angle and satisfying \eqref{cond} the linear relation
\begin{equation}\label{new}
	(a+b)\mathcal{A}_R(M)+(a+c)\mathcal{W}(M)=0\,,
\end{equation}
must hold. Thus, the conclusion of this result can be interpreted as saying that the functionals $\mathcal{A}_R$ and $\mathcal{W}$ are linearly dependent on the moduli space of admissible surfaces.

Furthermore, the linear relation \eqref{new} can be translated, by means of \eqref{difference}, into a linear relation between the hyperbolic and Euclidean bending energies, namely,
$$(a+b)\int_M H^2\,dA-(b-c)\mathcal{W}(M)=0\,.$$
\end{rem}

If $b=c=0$ in \eqref{cond}, then $H$ vanishes identically and the surface $M$ is minimal in $\mathbb{H}^3$. For this particular case, Theorem \ref{main} recovers Proposition 8.1 of \cite{AM}.

For a prescribed proportionality constant $\lambda>-1$, the parameters in \eqref{cond} must satisfy
$$(\lambda+1)\,a+\lambda \,b+c=0\,.$$
After identifying equations whose coefficients differ by multiplication by a nonzero constant, the above condition determines a one-parameter family of geometric equations. Every solution surface satisfying the hypotheses of Theorem \ref{main} obeys the same relation $\mathcal{A}_R(M)=\lambda\mathcal{W}(M)$.

In the next sections, we identify two distinguished members of this family whose equations have independent geometric origins: generalized singular minimal surfaces in $\mathbb{R}^3_+$ and linear Weingarten surfaces in $\mathbb{H}^3$.

\section{Generalized Singular Minimal Surfaces}\label{SMS}

In this section we consider surfaces arising as critical points of weighted area functionals in the Euclidean upper half-space \cite{G,F,F2}. More precisely, for a constant $\eta\in\mathbb{R}$, consider
$$\mathcal{F}_\eta (M)=\int_M z^{2\eta}\,d\overline{A}\,,$$
where $M$ is regarded as a surface in $\mathbb{R}^3_+=\mathbb{R}^3\cap\{z\geq 0\}$. The corresponding Euler--Lagrange equation is
\begin{equation}\label{ELsurfaces}
	\overline{H}=\eta\,\frac{\overline{\xi}_z}{z}\,.
\end{equation}
Surfaces satisfying \eqref{ELsurfaces} will be called generalized singular minimal surfaces, extending the terminology used for the classical case $\eta=1/2$ \cite{D}.

Equation \eqref{ELsurfaces} admits a simple interpretation in terms of hyperbolic quantities. Indeed, from \eqref{HHnew} and the notation $\nu=\overline{\xi}_z$, we obtain
$$H=z\,\overline{H}+\nu=\left(\eta+1\right)\nu\,.$$
This observation allows us to conclude that the equation defining generalized singular minimal surfaces in $\mathbb{R}^3$ belongs to the family \eqref{cond} with $b=0$, and therefore prescribes a universal relation between renormalized area $\mathcal{A}_R$ and Willmore energy $\mathcal{W}$.

\begin{corollary}\label{t2}
	Let $M$ be a surface in $\mathbb{H}^3$ approaching the ideal boundary $\partial_\infty\mathbb{H}^3$ at a right angle. If $H=(\eta+1)\nu$ for a constant $\eta\neq 0$, then
	$$\mathcal{A}_R(M)=\frac{2\eta+1}{\eta^2}\,\mathcal{W}(M)\,.$$
	(No restriction on the topology of $M$ is required.)
\end{corollary}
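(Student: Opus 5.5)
We reduce the statement to Theorem \ref{main} with $b=0$, so that no topological assumption on $M$ is needed. The task is to rewrite the hypothesis $H=(\eta+1)\nu$ as a pointwise identity of the form \eqref{cond} with $b=0$, and then read off the constant.

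The key point is that $\nu$ can be eliminated using only $H$ and $H-\nu$. From $H=(\eta+1)\nu$ we get
$$H-\nu=\eta\,\nu,$$
which agrees with $z\overline{H}=\eta\,\overline{\xi}_z$ via \eqref{HHnew}. Therefore
$$\eta(H-\nu)=\eta^2\nu \qquad\text{and}\qquad (\eta+1)(H-\nu)=\eta H.$$
Squaring the second identity gives
$$\eta^2H^2-(\eta+1)^2(H-\nu)^2=0.$$
This is \eqref{cond} with $a=\eta^2$, $b=0$ and $c=-(\eta+1)^2$. Since $\eta\neq 0$, the coefficient $a$ is nonzero, so the coefficients are not all zero, as Theorem \ref{main} requires.

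Theorem \ref{main} then gives $a+b=\eta^2\neq 0$ and
$$\mathcal{A}_R(M)=-\frac{a+c}{a+b}\,\mathcal{W}(M)=-\frac{\eta^2-(\eta+1)^2}{\eta^2}\,\mathcal{W}(M)=\frac{2\eta+1}{\eta^2}\,\mathcal{W}(M).$$
Because $b=0$, the last sentence of Theorem \ref{main} removes any restriction on the topology of $M$.

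The only step needing care is the choice of squared identity. It must involve only $H^2$ and $(H-\nu)^2$, since a $\nu^2$ term is not admissible in \eqref{cond}. This is why we use $(\eta+1)(H-\nu)=\eta H$ rather than the hypothesis directly.

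A direct check is also available. Proposition \ref{p1} together with \eqref{Wh} gives
$$\int_M H^2\,dA=\frac{(\eta+1)^2}{\eta^2}\,\mathcal{W}(M),$$
so $\lambda+1=(\eta+1)^2/\eta^2$ and hence $\lambda=(2\eta+1)/\eta^2$, the same value.
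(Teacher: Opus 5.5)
Your proposal is correct and follows the paper's proof exactly. You rewrite $H=(\eta+1)\nu$ as $\eta^2H^2-(\eta+1)^2(H-\nu)^2=0$, which is \eqref{cond} with $a=\eta^2$, $b=0$, $c=-(\eta+1)^2$, and then apply Theorem~\ref{main} using its $b=0$ clause; your supplementary check via Proposition~\ref{p1} and \eqref{Wh} is also valid and amounts to the same integration done directly.
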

\begin{proof} 
	From the hypothesis $H=(\eta+1)\nu$, we have
	$$\eta^2H^2-(\eta+1)^2\left(H-\nu\right)^2=0\,,$$
	which is the particular case of equation \eqref{cond} given by the parameters
	$$a=\eta^2\neq 0\,,\quad\quad\quad b=0\,,\quad\quad\quad c=-(\eta+1)^2\,.$$
	Since $\eta\neq 0$, these coefficients are not all zero. Therefore, Theorem \ref{main} applies without any topological restriction (because $b=0$) and yields
	$$\mathcal{A}_R(M)=-\frac{\eta^2-(\eta+1)^2}{\eta^2}\,\mathcal{W}(M)=\frac{2\eta+1}{\eta^2}\,\mathcal{W}(M)\,,$$
	as claimed.
\end{proof}

This proportionality relation is not vacuous. Generalized singular minimal surfaces invariant under translations and rotations were classified in \cite{L}, and rotationally symmetric solutions were further studied in \cite{DG,KL}. In particular, the rotational families described in those works contain surfaces that meet the plane $\{z=0\}$ orthogonally, and hence provide non-trivial examples to which Corollary \ref{t2} applies.

\section{Linear Weingarten Annuli}\label{LWA}

Let $M$ be a surface properly embedded in $\mathbb{H}^3$, and let $\kappa_1$ and $\kappa_2$ denote its principal curvatures. A surface whose principal curvatures satisfy a nontrivial functional relation is called a Weingarten surface, following the classical work of Weingarten \cite{W}. In this paper, we consider the (homogeneous) linear Weingarten relation
\begin{equation}\label{LW}
	\kappa_2=\mu\,\kappa_1\,,
\end{equation}
for some constant $\mu\neq 0$. A surface $M$ satisfying \eqref{LW} is said to be a linear Weingarten surface. When $\mu=1$, the surface is totally umbilical in $\mathbb{H}^3$ and so either (a part of) a totally geodesic plane, an equidistant surface, a horosphere, or a geodesic sphere. Among these surfaces, only totally geodesic planes approach $\partial_\infty\mathbb{H}^3$ at a right angle. We therefore assume $\mu\neq 1$ throughout the remainder of the paper.

The relation \eqref{LW} gives $H=(\mu+1)\kappa_1/2$ and $R=\mu\kappa_1^2$. Consequently,
\begin{equation}\label{LWeq}
	4\mu H^2-(\mu+1)^2R=0\,.
\end{equation}
Thus, linear Weingarten surfaces in $\mathbb{H}^3$ provide a distinguished case of the family of equations \eqref{cond}.

\begin{corollary}\label{t3}
	Let $M$ be a topological annulus in $\mathbb{H}^3$ approaching the ideal boundary $\partial_\infty\mathbb{H}^3$ at a right angle. If $\kappa_2=\mu\kappa_1$ for some constant $\mu\neq 0,1$, then
	$$\mathcal{A}_R(M)=\frac{4\mu}{(\mu-1)^2}\,\mathcal{W}(M)\,.$$
\end{corollary}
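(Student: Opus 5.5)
The plan is to realize the linear Weingarten relation as a particular instance of the family \eqref{cond} and then read off the constant from Theorem \ref{main}, exactly as was done for Corollary \ref{t2}.

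\emph{Step 1: identify the coefficients.} From $\kappa_2=\mu\kappa_1$ we have already derived \eqref{LWeq}, namely $4\mu H^2-(\mu+1)^2R=0$. This is the case
$$a=4\mu\,,\qquad b=-(\mu+1)^2\,,\qquad c=0$$
of \eqref{cond}. Since $\mu\neq 0$, we have $a\neq 0$, so the coefficients are not all zero.

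\emph{Step 2: apply Theorem \ref{main}.} Here $b\neq 0$ unless $\mu=-1$, so we genuinely need the annulus hypothesis to kill the term $2\pi b\chi(M)$. This hypothesis is assumed, so Theorem \ref{main} applies. It guarantees $a+b\neq 0$, which can also be checked directly:
$$a+b=4\mu-(\mu+1)^2=-(\mu-1)^2\,,$$
and this is nonzero precisely because $\mu\neq 1$. The conclusion is
$$\mathcal{A}_R(M)=-\frac{a+c}{a+b}\,\mathcal{W}(M)=-\frac{4\mu}{-(\mu-1)^2}\,\mathcal{W}(M)=\frac{4\mu}{(\mu-1)^2}\,\mathcal{W}(M)\,,$$
as claimed.

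\emph{Main obstacle.} There is essentially no obstacle; the only point needing care is the derivation of \eqref{LWeq}. Pointwise, $H=(\mu+1)\kappa_1/2$ and $R=\mu\kappa_1^2$, so
$$4\mu H^2=\mu(\mu+1)^2\kappa_1^2=(\mu+1)^2R\,.$$
This identity holds regardless of which principal curvature is labelled $\kappa_1$, and it remains valid at umbilic or flat points, where both sides vanish. One could also remark that the resulting constant $\lambda=4\mu/(\mu-1)^2$ is at least $-1$, since $4\mu+(\mu-1)^2=(\mu+1)^2\geq 0$. This agrees with Proposition \ref{p1}, and equality occurs only for $\mu=-1$, which is the hyperbolic minimal case.
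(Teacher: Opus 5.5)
Your proposal is correct and follows the paper's proof essentially verbatim: you identify \eqref{LWeq} as the case $a=4\mu$, $b=-(\mu+1)^2$, $c=0$ of \eqref{cond}, invoke Theorem~\ref{main} using the annulus hypothesis, and compute $a+b=-(\mu-1)^2\neq 0$ to get the constant $4\mu/(\mu-1)^2$. Your added checks (the pointwise derivation of \eqref{LWeq} and consistency of $\lambda$ with Proposition~\ref{p1}) are accurate but not needed.
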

\begin{proof} 
	Observe that \eqref{LWeq} is of the form \eqref{cond}, with $a=4\mu\neq 0$, $b=-(\mu+1)^2$, and $c=0$. The coefficients are not all zero, so Theorem \ref{main} applies. Since
	$$a+b=4\mu-(\mu+1)^2=-(\mu-1)^2\neq 0\,,$$
	we conclude that
	$$\mathcal{A}_R(M)=\frac{4\mu}{(\mu-1)^2}\,\mathcal{W}(M)\,,$$
	as claimed.
\end{proof}

To show that the class of surfaces considered in Corollary \ref{t3} contains nontrivial examples satisfying the required asymptotic behavior, we now restrict our attention to rotational linear Weingarten surfaces.

Let $M$ be invariant under the group of rotations about the $z$-axis. Then, $M$ is generated by a curve $\gamma\subset\mathbb{H}^2=\{(x,0,z)\in\mathbb{H}^3\}$, known as the generating curve, and the orbits of the rotation are Euclidean circles. We label the principal curvatures so that $\kappa_1=\kappa$ is the hyperbolic curvature of $\gamma$. If $\kappa$ is constant, then \eqref{LW} implies that $M$ is isoparametric. By the classification of isoparametric surfaces in $\mathbb{H}^3$ \cite{Ca}, such a surface is either totally umbilical or a spherical cylinder. However, approaching $\partial_\infty\mathbb{H}^3$ at a right angle forces the constant curvature $\kappa$ to vanish and so $M$ must be a totally geodesic plane. We henceforth assume that $\kappa$ is non-constant.

Rotational non-isoparametric linear Weingarten surfaces in $\mathbb{H}^3$ were characterized in Theorem 5.2.1 of \cite{Pa} (see also \cite{LP} for the analogous result in the Euclidean setting) in terms of $p$-elastic curves in $\mathbb{H}^2$. More precisely, a rotational non-isoparametric surface satisfies the relation \eqref{LW} for some $\mu\neq 0,1$ if and only if its generating curve satisfies the Euler--Lagrange equation associated to the $p$-elastic functional
$$\mathbf{\Theta}_p(\gamma)=\int_\gamma \kappa^p\,ds\,,$$
for $p=1/(1-\mu)$. Critical points in $\mathbb{H}^2$ for $p$-elastic functionals $\mathbf{\Theta}_p$ have been previously investigated (see, for instance, \cite{LS2,MP,Pa,PST}).

Since in our case $\kappa$ is assumed to be non-constant, the Euler--Lagrange equation of $\mathbf{\Theta}_p$ is equivalent to its first integral, which is the first-order ordinary differential equation
\begin{equation}\label{EL}
	\left(\kappa_s\right)^2=\frac{(1-\mu)^2\kappa^2}{\mu^2}\left(-\mu^2\kappa^2+c^2\kappa^{-2\mu/(1-\mu)}+1\right),
\end{equation}
where $c=\lvert 1-\mu\rvert\sqrt{a}$ and $a>0$ is a positive constant. The sign of the constant of integration $a$ determines the type of the orbits of the rotation; the case $a>0$ corresponds precisely to spherical rotations (for a detailed proof of this statement the reader is referred to Proposition 3.2.5 of \cite{Pa}).

The same characterization directly implies that $M$ can be viewed as the warped product surface $\gamma\times_f\mathbb{S}^1$, where the warping function is given by
\begin{equation}\label{f}
	f=\frac{1}{c}\,\kappa^{\,\mu/(1-\mu)}\,,
\end{equation}
for $c=\lvert 1-\mu\rvert \sqrt{a}$.

For fixed $\mu<0$ and $c>0$, equation \eqref{EL} determines a unique non-constant curvature $\kappa_{\mu,c}$ up to a translation of the arc length parameter. By the fundamental theorem of curves in $\mathbb{H}^2$, the function $\kappa_{\mu,c}$ determines a unique curve $\gamma_{\mu,c}$ up to an ambient isometry. We denote the corresponding rotational linear Weingarten surface by $M_{\mu,c}\cong\gamma_{\mu,c}\times_f\mathbb{S}^1$.

It remains to determine which of the surfaces $M_{\mu,c}$ approach the ideal boundary $\partial_\infty\mathbb{H}^3$ at a right angle. The following result shows that every such surface is a topological annulus approaching $\partial_\infty\mathbb{H}^3$ at a right angle.

\begin{theorem}\label{prop} For every $\mu<0$ and $c>0$, the rotational non-isoparametric linear Weingarten surface $M_{\mu,c}$ in $\mathbb{H}^3$ is a topological annulus and it approaches the ideal boundary $\partial_\infty\mathbb{H}^3$ at a right angle.
	
Moreover, the renormalized area $\mathcal{A}_R$ and the Willmore energy $\mathcal{W}$ of $M_{\mu,c}$ are given, respectively, by
$$\mathcal{A}_R(M_{\mu,c})=\frac{2\mu\pi}{c}\int_{\gamma_{\mu,c}}\kappa^{p+1}\,ds\,,\quad\quad\quad \mathcal{W}(M_{\mu,c})=\frac{(\mu-1)^2\pi}{2c}\int_{\gamma_{\mu,c}}\kappa^{p+1}\,ds\,,$$
for $p=1/(1-\mu)$.
\end{theorem}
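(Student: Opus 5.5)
The proof has two parts: first the geometry of $M_{\mu,c}$, then the energy formulas. For the geometry we use the warped product description $M_{\mu,c}\cong\gamma_{\mu,c}\times_f\mathbb{S}^1$ with $f=\frac1c\kappa^{\mu/(1-\mu)}$.

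Since $\mu<0$, the exponent $-2\mu/(1-\mu)$ in \eqref{EL} is positive, and so is $\mu/(1-\mu)$ with the sign making $f$ blow up or vanish as $\kappa$ does. We analyse the phase portrait of \eqref{EL}. The right-hand side $\kappa^2(-\mu^2\kappa^2+c^2\kappa^{-2\mu/(1-\mu)}+1)$ is positive for small $\kappa>0$. It has a unique positive zero $\kappa_{\max}$, because $-\mu^2\kappa^2$ eventually dominates: the exponent $-2\mu/(1-\mu)=2|\mu|/(1+|\mu|)<2$.

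Hence the non-constant solution increases from $0$ to $\kappa_{\max}$ and decreases back, with $\kappa\to0$ exponentially as $s\to\pm\infty$, since $\kappa_s\sim\pm\frac{|1-\mu|}{|\mu|}\kappa$ near $0$. The generating curve is therefore a properly embedded arc whose two ends go to infinity in $\mathbb{H}^2$. Along the ends the warping function $f$ (the hyperbolic radius of the rotation orbits) behaves like a power of $\kappa$. I would rely on the geometric description in Appendix A to show that both ends reach $\{z=0\}$ orthogonally, away from the axis, and that $\gamma$ does not cross the axis. Then $M$ is the rotation of an arc with endpoints on $\partial_\infty\mathbb{H}^2$, hence an annulus meeting $\{z=0\}$ orthogonally. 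This step is the main obstacle. I would try to write $\gamma$ explicitly in terms of $\kappa$: using the Killing field generating the spherical rotations, $z$ and the distance to the axis can be expressed through $f$ and $\kappa_s$. The right-angle condition then reduces to the fact that $\kappa$ vanishes at the ends, so the curve is asymptotically a geodesic, which meets the ideal boundary orthogonally. Smooth extension to the boundary comes from the exponential decay of $\kappa$ in $s$, after translating into powers of $z$.

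For the energies, Corollary \ref{t3} applies because $M$ is an annulus. It therefore suffices to compute $\mathcal{A}_R$; the formula for $\mathcal{W}$ then follows by dividing by $4\mu/(\mu-1)^2$, and indeed $\frac{2\mu\pi}{c}\cdot\frac{(\mu-1)^2}{4\mu}=\frac{(\mu-1)^2\pi}{2c}$.

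By \eqref{ARhyperbolic} with $\chi=0$, we have $\mathcal{A}_R=\int_M R\,dA=\int_M\mu\kappa^2\,dA$. On the warped product, $dA=f\,ds\,d\theta$, so
$$\mathcal{A}_R=2\pi\mu\int_\gamma\kappa^2 f\,ds=\frac{2\pi\mu}{c}\int_\gamma\kappa^{2+\mu/(1-\mu)}\,ds.$$
Since $2+\frac{\mu}{1-\mu}=\frac{2-\mu}{1-\mu}=1+\frac{1}{1-\mu}=p+1$, this is the stated formula. Convergence of this integral follows from the exponential decay of $\kappa$, since $p+1>0$. This matches the convergence of $\int R\,dA$, which is needed anyway for \eqref{ARhyperbolic}.
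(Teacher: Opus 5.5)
Your proposal follows the paper's proof: the annulus and right-angle assertions are taken from the explicit parameterization in Appendix A, and the energies come from $\mathcal{A}_R=\int_M R\,dA$ with $R=\mu\kappa^2$ and $dA=f\,ds\,d\theta$, followed by Corollary~\ref{t3} for $\mathcal{W}$. Your direct justification of the convergence of $\int_\gamma\kappa^{p+1}\,ds$ via the exponential decay of $\kappa$ is fine.

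Two side remarks in your sketch are wrong and should be corrected, although neither is used in the argument. First, $\mu/(1-\mu)<0$, so $f\to\infty$ as $\kappa\to0$; this is what pushes the ends away from the axis. Second, the extension to $\{z=0\}$ is in general only $\mathcal{C}^2$, not smooth. Near an endpoint, $\kappa=(\tau/c)^{1+1/|\mu|}$ with $\tau=1/f$, and this produces a $z^{2+1/|\mu|}$ term in the profile, which for generic $\mu$ is not a smooth function of $z$.
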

\begin{proof} 
	As shown in Appendix A, the generating curve $\gamma_{\mu,c}$ is a complete embedded curve with two endpoints approaching the ideal boundary $\partial_\infty\mathbb{H}^2$ at a right angle. Moreover, $\gamma_{\mu,c}$ does not intersect the axis of rotation. Consequently, its corresponding rotational surface $M_{\mu,c}$ is a properly embedded topological annulus. The orthogonal approach of $\gamma_{\mu,c}$ to $\partial_\infty\mathbb{H}^2$ implies that $M_{\mu,c}$ approaches $\partial_\infty\mathbb{H}^3$ at a right angle.

	For the second assertion, we use \eqref{ARhyperbolic} to compute the renormalized area $\mathcal{A}_R$ of $M_{\mu,c}$. Since $M_{\mu,c}$ is a topological annulus, $\chi(M_{\mu,c})=0$, and hence
	$$\mathcal{A}_R(M_{\mu,c})=\int_{M_{\mu,c}} R\,dA=\mu\int_{M_{\mu,c}} \kappa^2\,dA=\frac{2\mu\pi}{c}\int_{\gamma_{\mu,c}} \kappa^{(2-\mu)/(1-\mu)}\,ds=\frac{2\mu\pi}{c}\int_{\gamma_{\mu,c}}\kappa^{p+1}\,ds\,.$$
	In the second equality, we used that $R=\mu\kappa_1^2=\mu\kappa^2$ for rotational linear Weingarten surfaces, while in the third we employed that $M_{\mu,c}\cong\gamma_{\mu,c}\times_f\mathbb{S}^1$, where $f$ is given in \eqref{f}, and so $dA=f d\theta ds$. In particular, since $\mathcal{A}_R(M_{\mu,c})$ is finite, the integral on the right-hand side converges.
	
	Finally, the expression of the Willmore energy $\mathcal{W}$ of $M_{\mu,c}$ follows from Corollary \ref{t3}.
\end{proof}

Theorem \ref{prop} reveals a shift in the curvature-dependent energy associated with the generating curve: although $\gamma_{\mu,c}$ is critical for the $p$-elastic functional $\mathbf{\Theta}_p$ for $p=1/(1-\mu)$, the renormalized area $\mathcal{A}_R$ and the Willmore energy $\mathcal{W}$ of the corresponding rotational surface $M_{\mu,c}$ are both determined by its $(p+1)$-elastic energy $\mathbf{\Theta}_{p+1}$, namely,
$$\mathbf{\Theta}_{p+1}(\gamma_{\mu,c})=\int_{\gamma_{\mu,c}}\kappa^{p+1}\,ds\,.$$

The fact that the Willmore energy $\mathcal{W}$ of the rotational linear Weingarten annuli $M_{\mu,c}$ is determined by a functional of the form $\mathbf{\Theta}_{p+1}$ is reminiscent of the classical reduction of Langer and Singer \cite{LS} for Euclidean rotational tori. In their setting, the Willmore energy of a rotational torus is, up to a universal multiplicative constant, the classical elastic energy $\mathbf{\Theta}_2$ of its generating curve, regarded as a curve in the hyperbolic plane $\{(x,0,z)\,\lvert\,x>0\}$ with metric $(dx^2+dz^2)/x^2$. There is, however, an important geometric distinction between the two constructions. In the present setting, $\mathbb{H}^2=\{(x,0,z)\,\lvert\,z>0\}$ is a totally geodesic plane of the ambient hyperbolic space and its induced metric is $(dx^2+dz^2)/z^2$. Thus, the two generating curves are regarded as curves in different hyperbolic planes with conformal factors $x^{-2}$ and $z^{-2}$, respectively. In addition, in the Langer--Singer setting the ideal boundary $\{x=0\}$ is the axis of rotation, whereas in the present setting the axis of rotation is a geodesic contained in the interior of $\mathbb{H}^2$.

In the following section, we use the first integral \eqref{EL} to evaluate this $(p+1)$-elastic energy $\mathbf{\Theta}_{p+1}$ in terms of generalized hyperelliptic integrals.

\section{Renormalized Area and Willmore Energy of Rotational Linear Weingarten Annuli}\label{5}

In this section we evaluate the $(p+1)$-elastic energy $\mathbf{\Theta}_{p+1}$ appearing in Theorem \ref{prop} by means of the first integral \eqref{EL}. This yields explicit formulas for the renormalized area $\mathcal{A}_R$ and Willmore energy $\mathcal{W}$ of the rotational linear Weingarten annuli $M_{\mu,c}$, as well as the ranges of both functionals in the family $\{M_{\mu,c}\,\lvert\,\mu<0,c>0\}$.

\begin{theorem}\label{t5}
	Let $\mu<0$ and $c>0$. The renormalized area $\mathcal{A}_R$ of the rotational linear Weingarten annulus $M_{\mu,c}$ is given by the convergent improper integral
	\begin{equation}\label{ARhyperelliptic}
		\mathcal{A}_R(M_{\mu,c})=-\frac{4\mu^2\pi}{c}\int_0^\alpha \frac{x^{1-\mu}}{\sqrt{Q_{\mu,c}(x)}}\,dx\,,
	\end{equation}
	where $\alpha>0$ is the only positive solution of $Q_{\mu,c}(x)=-\mu^2x^{2(1-\mu)}+c^2x^{-2\mu}+1=0$.
	
	Moreover, as $\mu<0$ and $c>0$ vary, the renormalized area $\mathcal{A}_R(M_{\mu,c})$ takes every value in $(-\infty,0)$.
\end{theorem}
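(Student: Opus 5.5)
By Theorem \ref{prop}, it suffices to evaluate $\mathbf{\Theta}_{p+1}(\gamma_{\mu,c})=\int_{\gamma_{\mu,c}}\kappa^{p+1}\,ds$ and multiply by $2\mu\pi/c$. The plan is to change variables in the first integral \eqref{EL} by setting $x=\kappa^{p}=\kappa^{1/(1-\mu)}$, so that $\kappa=x^{1-\mu}$. Since $\mu<0$ we have $1-\mu>0$. Under this substitution $\kappa^2=x^{2(1-\mu)}$ and $\kappa^{-2\mu/(1-\mu)}=x^{-2\mu}$, and \eqref{EL} becomes
$$\kappa_s^2=\frac{(1-\mu)^2}{\mu^2}\,x^{2(1-\mu)}\,Q_{\mu,c}(x).$$
Using $d\kappa=(1-\mu)x^{-\mu}dx$, the arc length element is $ds=\lvert\mu\rvert\,x^{-1}\,dx/\sqrt{Q_{\mu,c}(x)}$. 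Since $\kappa^{p+1}=x^{2-\mu}$, on any arc where $\kappa$ is monotone we get
$$\kappa^{p+1}\,ds=\lvert\mu\rvert\,\frac{x^{1-\mu}}{\sqrt{Q_{\mu,c}(x)}}\,dx.$$

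Next I will use the geometric description of Appendix A. Along $\gamma_{\mu,c}$ the curvature is positive with the orientation fixed by $f>0$. It tends to $0$ at both ends, where the curve meets $\partial_\infty\mathbb{H}^2$ orthogonally. It increases to a single maximum and then decreases symmetrically, and at the maximum $\kappa_s=0$, which forces $Q_{\mu,c}(x)=0$ there. Hence $\gamma_{\mu,c}$ splits into two congruent monotone arcs on which $x$ runs over $(0,\alpha)$. Therefore
$$\mathbf{\Theta}_{p+1}=2\lvert\mu\rvert\int_0^\alpha\frac{x^{1-\mu}}{\sqrt{Q_{\mu,c}}}\,dx,$$
and multiplying by $2\mu\pi/c$ gives \eqref{ARhyperelliptic}, since $\mu\lvert\mu\rvert=-\mu^2$.

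It remains to show that $\alpha$ is well defined and that the integral converges. Write
$$Q_{\mu,c}(x)-1=x^{-2\mu}\left(c^2-\mu^2x^2\right).$$
Its derivative is
$$x^{-2\mu-1}\left(-2\mu c^2-2\mu^2(1-\mu)x^2\right),$$
and the bracket changes sign exactly once on $(0,\infty)$. So $Q_{\mu,c}$ starts at $Q_{\mu,c}(0^+)=1$, increases and then decreases to $-\infty$, which gives a unique positive root $\alpha$ that is simple. Consequently the integrand behaves like $x^{1-\mu}$ near $0$ and like $(\alpha-x)^{-1/2}$ near $\alpha$, and both are integrable.

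For the range, I will use continuity of $(\mu,c)\mapsto\mathcal{A}_R(M_{\mu,c})$ on the connected set $\{\mu<0,\,c>0\}$, which follows from the continuous dependence of $\alpha$ on $(\mu,c)$ and dominated convergence. Negativity is immediate from the formula. It then suffices to find two limits.
\begin{itemize}
\item \emph{Fix $\mu$ and let $c\to0^+$.} Then $Q_{\mu,c}\to 1-\mu^2x^{2(1-\mu)}$ and the integral tends to a positive finite limit, so the prefactor $1/c$ forces $\mathcal{A}_R\to-\infty$.
\item \emph{Fix $c$ and let $\mu\to0^-$.} The integral should stay bounded, since $Q_{\mu,c}\to 1+c^2$, which cuts off near $x\approx 1$. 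The factor $\mu^2$ then sends $\mathcal{A}_R\to0^-$.
\end{itemize}
The intermediate value theorem then yields every value in $(-\infty,0)$.

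The main obstacle is justifying these limits uniformly near the moving root $\alpha$, where the integrand is singular. To control this I would substitute $x=\alpha t$, reducing to an integral over $[0,1]$. Near $t=1$, I would factor $Q_{\mu,c}(\alpha t)=(1-t)\,G_{\mu,c}(t)$ with $G_{\mu,c}$ bounded below uniformly in the relevant parameter range, and then apply dominated convergence.
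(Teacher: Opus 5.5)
Your derivation of \eqref{ARhyperelliptic} is correct and is essentially the paper's argument. You substitute $x=\kappa^{1/(1-\mu)}$ in one step, while the paper first writes the integral in $\kappa$. Your analysis of $Q_{\mu,c}$ (unique simple root $\alpha$, integrability) matches the remark after the theorem, and your $c\to0^+$ limit is also right.

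The range argument, however, rests on a false limit: $\mathcal{A}_R(M_{\mu,c})$ does not tend to $0$ as $\mu\to0^-$ with $c$ fixed. The pointwise convergence $Q_{\mu,c}\to1+c^2$ is not uniform on $[0,\alpha]$, because the root escapes to infinity. Dividing $Q_{\mu,c}(\alpha)=0$ by $\alpha^{-2\mu}$ gives $\mu^2\alpha^2=c^2+\alpha^{2\mu}$, so $\lvert\mu\rvert\alpha\to\sqrt{1+c^2}$. Your own rescaling $x=\alpha t$ then shows, since $Q_{\mu,c}(\alpha t)\to(1+c^2)(1-t^2)$ and $\alpha^{-\mu}\to1$,
$$\mu^2\int_0^\alpha\frac{x^{1-\mu}}{\sqrt{Q_{\mu,c}(x)}}\,dx=\mu^2\alpha^{2-\mu}\int_0^1\frac{t^{1-\mu}}{\sqrt{Q_{\mu,c}(\alpha t)}}\,dt\longrightarrow\sqrt{1+c^2}\int_0^1\frac{t\,dt}{\sqrt{1-t^2}}=\sqrt{1+c^2}\,.$$
So the prefactor $\mu^2$ is exactly compensated, and $\mathcal{A}_R(M_{\mu,c})\to-4\pi\sqrt{1+c^2}/c<-4\pi$, as recorded in the remark following the theorem. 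The other natural regime, $c\to\infty$ at fixed $\mu$, gives only $-4\pi$. The intermediate value theorem applied to your limits therefore yields at most $(-\infty,-4\pi)$, and the values in $[-4\pi,0)$ are never reached.

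The missing ingredient is the regime $\mu\to-\infty$ at fixed $c$, which is the one the paper uses. Substitute $x=\alpha y^{1/(1-\mu)}$ and set $\beta=-\mu\alpha^{1-\mu}$. Then \eqref{ARhyperelliptic} becomes
$$\mathcal{A}_R(M_{\mu,c})=-\frac{4\pi}{c}\,\frac{(-\mu)^{-\mu/(1-\mu)}}{1-\mu}\,\beta^{(2-\mu)/(1-\mu)}\int_0^1\frac{y^{1/(1-\mu)}\,dy}{\sqrt{1-\beta^2y^2+c^2\bigl(\beta y/(-\mu)\bigr)^{-2\mu/(1-\mu)}}}\,.$$
The equation defining $\alpha$ forces $\beta\to1$, and the prefactor tends to $1$. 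Dominated convergence, with a majorant of the form $C/\sqrt{1-y^2}$, gives $\mathcal{A}_R(M_{\mu,c})\to-2\pi^2/c$. As $c$ ranges over $(0,\infty)$, these limits sweep out all of $(-\infty,0)$. Combined with continuity on the connected parameter domain and negativity of the formula, this gives the range exactly $(-\infty,0)$, without needing either of your two limits.
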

\begin{proof} 
	By Theorem \ref{prop}, the renormalized area $\mathcal{A}_R$ of $M_{\mu,c}$ is given by
	$$\mathcal{A}_R(M_{\mu,c})=\frac{2\mu\pi}{c}\int_{\gamma_{\mu,c}} \kappa^{p+1}\,ds\,,$$
	where $p=1/(1-\mu)$. Since the curvature $\kappa=\kappa_{\mu,c}$ of the generating curve $\gamma_{\mu,c}$ is symmetric and monotone on each branch of $\gamma_{\mu,c}$, varying from $0$ to its maximum value $\widehat{\alpha}>0$, we may use \eqref{EL} to make a change of variable in the above integral (see Appendix A for details). This then reads
	$$\int_{\gamma_{\mu,c}} \kappa^{p+1}\,ds=-\frac{2\mu}{1-\mu}\int_0^{\widehat{\alpha}} \frac{\kappa^{1/(1-\mu)}}{\sqrt{\widehat{Q}_{\mu,c}(\kappa)}}\,d\kappa\,,$$
	where $\widehat{\alpha}>0$ is the only positive solution of $\widehat{Q}_{\mu,c}(\kappa)=-\mu^2\kappa^2+c^2\kappa^{-2\mu/(1-\mu)}+1=0$. 
	The change of variable $\kappa=x^{1-\mu}$ then yields formula \eqref{ARhyperelliptic}, proving the first assertion.
	
	For the second assertion, observe that for every fixed $c_o>0$,
	$$\lim_{\mu\to-\infty} \mathcal{A}_R(M_{\mu,c_o})=-\frac{2\pi^2}{c_o}\,.$$
	Indeed, making the change of variable $x=\alpha y^{1/(1-\mu)}$ in \eqref{ARhyperelliptic} and setting $\beta=-\mu\alpha^{1-\mu}$, we obtain
	$$\mathcal{A}_R(M_{\mu,c_o})=-\frac{4\pi}{c_o}\cdot\frac{(-\mu)^{-\mu/(1-\mu)}}{1-\mu}\cdot\beta^{(2-\mu)/(1-\mu)}\int_0^1 \frac{y^{1/(1-\mu)}}{\sqrt{1-\beta^2y^2+c_o^2\left(\frac{\beta y}{-\mu}\right)^{-2\mu/(1-\mu)}}}\,dy\,.$$
	Since $Q_{\mu,c_o}(\alpha)=0$, the number $\beta$ satisfies
	$$1-\beta^2+c_o^2\left(\frac{\beta}{-\mu}\right)^{-2\mu/(1-\mu)}=0\,.$$
	It then follows that $\beta\to 1$ as $\mu\to-\infty$. Moreover,
	$$\frac{(-\mu)^{-\mu/(1-\mu)}}{1-\mu}\cdot\beta^{(2-\mu)/(1-\mu)}\longrightarrow 1\,,$$
	as $\mu\to-\infty$. The integrand above converges pointwise on $(0,1)$ to $1/\sqrt{1-y^2}$ and, for all sufficiently negative $\mu$, these integrands are bounded by an integrable function of the form $C/\sqrt{1-y^2}$. The dominated convergence theorem therefore gives
	$$\lim_{\mu\to-\infty}\mathcal{A}_R(M_{\mu,c_o})=-\frac{4\pi}{c_o}\int_0^1\frac{dy}{\sqrt{1-y^2}}=-\frac{2\pi^2}{c_o}\,.$$
	
	Finally, formula \eqref{ARhyperelliptic} shows that $\mathcal{A}_R(M_{\mu,c})$ depends continuously on $(\mu,c)\in(-\infty,0)\times(0,\infty)$, which is a connected domain. Its range is therefore an interval. Given any fixed $\rho<0$, choose $c_o=-2\pi^2/\rho>0$. By the above limit, the value $\rho$ belongs to the closure of the range. Hence, the closure of the range contains $(-\infty,0)$, while formula \eqref{ARhyperelliptic} directly shows that $\mathcal{A}_R(M_{\mu,c})<0$ for every $\mu<0$ and $c>0$, and so the range itself is contained in $(-\infty,0)$. Since the range is an interval, it must be exactly $(-\infty, 0)$. This completes the proof.
\end{proof}

\begin{rem} For every fixed $\mu<0$ and $c>0$, the improper integral in \eqref{ARhyperelliptic} converges. Indeed, $Q_{\mu,c}(0)=1$, so the integrand is integrable near $x=0$. Moreover, $Q_{\mu,c}'(\alpha)<0$, and hence there exist $\epsilon>0$ and $C>0$ such that  $$\frac{x^{1-\mu}}{\sqrt{Q_{\mu,c}(x)}} \leq \frac{C}{\sqrt{\alpha-x}}\,,$$ for every $x\in(\alpha-\varepsilon,\alpha)$. Therefore, $x^{1-\mu}/\sqrt{Q_{\mu,c}(x)}\in\mathcal{L}^1(0,\alpha)$. 
	
Formula \eqref{ARhyperelliptic} also yields the following asymptotic relations. For every fixed $\mu_o<0$, one has 
$$\lim_{c\to 0^+}\mathcal{A}_R(M_{\mu_o,c})=-\infty\,,\quad\quad\quad\text{and}\quad\quad\quad\lim_{c\to\infty}\mathcal{A}_R(M_{\mu_o,c})=-4\pi\,.$$ 
Likewise, for every fixed $c_o>0$,
$$\lim_{\mu\to 0^-}\mathcal{A}_R(M_{\mu,c_o})=-4\pi\frac{\sqrt{c_o^2+1}}{c_o}\,,\quad\quad\quad\text{and}\quad\quad\quad\lim_{\mu\to-\infty}\mathcal{A}_R(M_{\mu,c_o})=-\frac{2\pi^2}{c_o}\,.$$ 
These limits follow from suitable changes of variable in formula \eqref{ARhyperelliptic}.
\end{rem}

For general $\mu<0$, the integral in \eqref{ARhyperelliptic} is a generalized hyperelliptic integral, since $Q_{\mu,c}$ need not be a polynomial. For certain values of $\mu$, however, it reduces to an elliptic integral. A distinguished case is $\mu=-1$, for which \eqref{LW} reads $\kappa_2=-\kappa_1$ and so $M_{-1,c}$ is a spherical catenoid in $\mathbb{H}^3$. In this case, formula \eqref{ARhyperelliptic} agrees with the expression first obtained in Theorem 4.1 of \cite{PT} (see (20) of \cite{PT} as well) in terms of the standard complete elliptic integrals of the first (K) and second (E) kind,
$$\mathcal{A}_R(M_{-1,c})=\frac{4\pi}{\sqrt{2\zeta^2-1}}\left((1-\zeta^2)K(\zeta)-E(\zeta)\right),\quad\quad\quad \zeta=\sqrt{\frac{c^2+\sqrt{c^4+4}}{2\sqrt{c^4+4}}}\,.$$

The proportionality relation in Corollary \ref{t3} immediately converts formula \eqref{ARhyperelliptic} into an explicit expression for the Willmore energy $\mathcal{W}$. Determining its range requires, in addition, the sharp lower bound for the Willmore energy of the doubled rotational tori.

\begin{theorem}\label{cor}
	Let $\mu<0$ and $c>0$. The Willmore energy $\mathcal{W}$ of the rotational linear Weingarten annulus $M_{\mu,c}$, regarded as a surface in $\mathbb{R}^3_+$, is given by the convergent improper integral
	\begin{equation}\label{Whyperelliptic}
		\mathcal{W}(M_{\mu,c})=-\frac{\mu(\mu-1)^2\pi}{c}\int_0^\alpha \frac{x^{1-\mu}}{\sqrt{Q_{\mu,c}(x)}}\,dx\,,
	\end{equation}
	where $\alpha>0$ is the only positive solution of $Q_{\mu,c}(x)=-\mu^2x^{2(1-\mu)}+c^2x^{-2\mu}+1=0$.
	
	Moreover, as $\mu<0$ and $c>0$ vary, the Willmore energy $\mathcal{W}(M_{\mu,c})$ takes every value in $(\pi^2,\infty)$.
\end{theorem}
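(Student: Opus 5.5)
The formula will follow directly from Theorem \ref{t5} and Corollary \ref{t3}. Since $M_{\mu,c}$ is a topological annulus meeting $\partial_\infty\mathbb{H}^3$ orthogonally (Theorem \ref{prop}), Corollary \ref{t3} gives $\mathcal{W}(M_{\mu,c})=\frac{(\mu-1)^2}{4\mu}\mathcal{A}_R(M_{\mu,c})$. Substituting \eqref{ARhyperelliptic} yields
$$\mathcal{W}(M_{\mu,c})=\frac{(\mu-1)^2}{4\mu}\cdot\left(-\frac{4\mu^2\pi}{c}\right)\int_0^\alpha\frac{x^{1-\mu}}{\sqrt{Q_{\mu,c}(x)}}\,dx\,,$$
which is \eqref{Whyperelliptic}. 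Convergence is inherited from the remark following Theorem \ref{t5}.

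For the range, I will argue as in Theorem \ref{t5}. The map $(\mu,c)\mapsto\mathcal{W}(M_{\mu,c})$ is continuous on the connected set $(-\infty,0)\times(0,\infty)$, so its range is an interval. It therefore suffices to show three things: the values accumulate at $+\infty$, they accumulate at $\pi^2$, and they stay strictly above $\pi^2$.

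\textbf{Unboundedness.} Fix $\mu_o<0$ and use $\lim_{c\to0^+}\mathcal{A}_R=-\infty$ from the remark. Since $\frac{(\mu-1)^2}{4\mu}<0$ is a fixed factor, $\mathcal{W}\to+\infty$. Alternatively, I can use $\mu\to-\infty$ with $c_o$ fixed: there $\mathcal{A}_R\to-2\pi^2/c_o$ while the factor $(\mu-1)^2/(4|\mu|)$ blows up.

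\textbf{Approaching $\pi^2$.} I need a path on which $\mathcal{W}\to\pi^2$. The natural candidate is the regime giving the Clifford torus after doubling, whose Willmore energy is $2\pi^2$, so half of it is $\pi^2$. I would first try $\mu=-1$: there the factor $(\mu-1)^2/(4|\mu|)$ equals $1$, so $\mathcal{W}=-\mathcal{A}_R$. I would then take $c\to c_*$ for a suitable parameter, using the elliptic-integral formula of \cite{PT}, and check whether the infimum is $\pi^2$. If the infimum along $\mu=-1$ is not $\pi^2$, I would instead optimize jointly in $(\mu,c)$. The candidate I would test is $c\to\infty$ combined with $\mu\to0^-$ suitably, or $\mu\to-\infty$ with $c$ scaled. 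In each case I would rescale the integral (as with $x=\alpha y^{1/(1-\mu)}$) to reduce to $\int_0^1 dy/\sqrt{1-y^2}=\pi/2$ and apply dominated convergence, aiming at a limit of exactly $\pi^2$.

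\textbf{Strict lower bound.} Reflect $M_{\mu,c}$ across $\{z=0\}$. Orthogonality gives a closed $\mathcal{C}^2$ embedded torus whose Willmore energy is $2\mathcal{W}(M_{\mu,c})$. The Marques--Neves theorem (the Willmore conjecture), valid for $\mathcal{C}^2$ or $\mathcal{W}^{2,2}$ tori, gives $2\mathcal{W}\ge2\pi^2$, with equality only for Möbius images of the Clifford torus. I then have to exclude equality: a Clifford-type torus is smooth and its Willmore energy is real-analytic, while the doubled surface is only $\mathcal{C}^2$ across the plane. Equivalently, I would show directly that no conformal Clifford torus is, in its hyperbolic half, a linear Weingarten surface $\kappa_2=\mu\kappa_1$ with $\mu<0$.

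I expect the main obstacle to be identifying an explicit parameter regime where $\mathcal{W}\to\pi^2$. This requires asymptotic analysis of the hyperelliptic integral, together with confirming that the limit is exactly the Clifford value rather than something larger.
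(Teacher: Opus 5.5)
\textbf{Gap 1: approaching $\pi^2$.} Your derivation of \eqref{Whyperelliptic} and your overall range argument (continuity on a connected parameter set, accumulation at both ends, strict lower bound) follow the paper. But there is a genuine gap exactly where you anticipate one: nothing shows that $\mathcal W(M_{\mu,c})$ comes arbitrarily close to $\pi^2$. So you only obtain a range $(m,\infty)$ or $[m,\infty)$ with an unknown $m\ge\pi^2$.

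Your first candidate cannot close this gap. For fixed $\mu_o$, the limits in the remark after Theorem~\ref{t5} give $\mathcal W(M_{\mu_o,c})\to\infty$ as $c\to0^+$, and $\mathcal W(M_{\mu_o,c})\to\pi(1-\mu_o)^2/\lvert\mu_o\rvert\ge 4\pi$ as $c\to\infty$. Since also $\mathcal W>\pi^2$ pointwise, the infimum over any fixed-$\mu$ slice (in particular $\mu=-1$, the spherical catenoids) is strictly larger than $\pi^2$. Likewise, $\mu\to0^-$ with $c$ fixed sends $\mathcal W\to\infty$.

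The missing idea is a scaling argument. The factor $(\mu-1)^2/(4\lvert\mu\rvert)$ grows linearly in $\lvert\mu\rvert$, while $\mathcal A_R(M_{\mu,c_o})\to-2\pi^2/c_o$. So $c$ must grow linearly with $\lvert\mu\rvert$, and the paper takes $c=t\mu$ with $t\in(-1,0)$ and $\mu\to-\infty$. In the rescaled integral from the proof of Theorem~\ref{t5}, the $c$-term becomes $t^2(-\mu)^{2/(1-\mu)}(\beta y)^{-2\mu/(1-\mu)}\to t^2\beta^2y^2$. Hence $\beta\to(1-t^2)^{-1/2}$ rather than $1$, and dominated convergence gives
\[
\lim_{\mu\to-\infty}\mathcal W(M_{\mu,t\mu})=-\frac{\pi^2}{2t\sqrt{1-t^2}}\,.
\]
This function takes every value in $[\pi^2,\infty)$, with its minimum at $t=-1/\sqrt2$. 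The correction coming from $\beta$ matters: plugging $c=\lvert\mu\rvert/\sqrt2$ naively into the fixed-$c$ limit would wrongly suggest $\pi^2/\sqrt2$.

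\textbf{Gap 2: excluding equality.} Your primary argument here fails. The claim that the doubled torus is ``only $\mathcal C^2$'' is unproved, and it is false in general. In the endpoint parametrization of Appendix~B, one has $\phi_\tau=x/\bigl((1+\tau^2)\sqrt{1-x^2+\tau^2}\bigr)$, where $x=\mu\kappa=\mu(\tau/c)^{1-1/\mu}$; this holds for every $c$. For $\mu=-1/2$ this gives $x\propto\tau^3$, so $\phi_\tau$ is odd and analytic and $\phi$ is even. The reflected curve, and hence $\widetilde M_{-1/2,c}$, is therefore real-analytic across $\{z=0\}$, so regularity cannot distinguish these tori from a Clifford torus.

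Your alternative is the right one, and it is what the paper does; it becomes short once symmetry is used. The torus $\widetilde M_{\mu,c}$ is rotational about the $z$-axis and symmetric under reflection in $\{z=0\}$. So equality, via Langer--Singer for axially symmetric tori (which also sidesteps applying Marques--Neves to a merely $\mathcal C^2$ surface), forces $\widetilde M_{\mu,c}$ to be a dilation of the rotational Clifford torus generated by the unit circle centered at $(\sqrt2,0,0)$. That circle meets $\{z=0\}$ orthogonally, so its upper half is a hyperbolic geodesic. This gives $\kappa_1=0$ while $\kappa_2=\sqrt2\sin s/(\sqrt2+\cos s)\neq0$, which violates $\kappa_2=\mu\kappa_1$.
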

\begin{proof} 
	The expression \eqref{Whyperelliptic} follows immediately from Corollary \ref{t3} and Theorem \ref{t5}. Indeed,
	$$\mathcal{W}(M_{\mu,c})=\frac{(\mu-1)^2}{4\mu}\,\mathcal{A}_R(M_{\mu,c})=-\frac{\mu(\mu-1)^2\pi}{c}\int_0^\alpha \frac{x^{1-\mu}}{\sqrt{Q_{\mu,c}(x)}}\,dx\,,$$
	where $\alpha>0$ is the only positive solution of $Q_{\mu,c}(x)=-\mu^2x^{2(1-\mu)}+c^2x^{-2\mu}+1=0$.
	
	We next compute the range of the map $\mathcal{W}$ acting on the family $\{M_{\mu,c}\,\lvert\,\mu<0,c>0\}$. A change of variable analogous to that of the proof of Theorem \ref{t5} in combination with the dominated convergence theorem can be used to show that
	\begin{equation}\label{limit}
	\lim_{\mu\to-\infty} \mathcal{W}(M_{\mu,t\mu})=-\frac{\pi^2}{2t\sqrt{1-t^2}}\,,
	\end{equation}
	for any $t\in(-1,0)$. The function on the right-hand side of \eqref{limit} takes every value in $[\pi^2,\infty)$; its minimum $\pi^2$ is attained at $t=-1/\sqrt{2}$, while it tends to infinity as $t\to 0^-$ or $t\to-1^+$.
	
	It remains to prove that $\pi^2$ is a strict lower bound for $\mathcal{W}(M_{\mu,c})$. Since $M_{\mu,c}$ meets the plane $\{z=0\}$ orthogonally, reflection across this plane produces a $\mathcal{C}^2$ rotational torus $\widetilde{M}_{\mu,c}$ in $\mathbb{R}^3$. By symmetry, $\mathcal{W}(\widetilde{M}_{\mu,c})=2\mathcal{W}(M_{\mu,c})$. The sharp Willmore inequality for axially symmetric tori \cite{LS} (see also \cite{MN} for the general case, without any symmetry assumption) therefore gives $\mathcal{W}(M_{\mu,c})\geq\pi^2$. Equality would imply that $\widetilde{M}_{\mu,c}$ is, up to a Euclidean similarity, the rotational Clifford torus. After employing its standard parameterization 
	$$X(s,t)=\left((\sqrt{2}+\cos s)\cos t, (\sqrt{2}+\cos s)\sin t, \sin s\right),$$
	we compute the (Euclidean) principal curvatures of the rotational Clifford torus with respect to the outward pointing normal
	$$\overline{\xi}=\left(\cos s \cos t, \cos s \sin t, \sin s\right),$$
	obtaining
	$$\overline{\kappa}_1=-1\,,\quad\quad\quad \overline{\kappa}_2=-\frac{\cos s}{\sqrt{2}+\cos s}\,.$$
	Conformally transforming its upper half to the hyperbolic metric, we get from $\kappa_i=z\overline{\kappa}_i+\overline{\xi}_z$ the (hyperbolic) principal curvatures
	$$\kappa_1=0\,,\quad\quad\quad \kappa_2=\frac{\sqrt{2}\sin s}{\sqrt{2}+\cos s}\,,$$
	which do not satisfy the relation \eqref{LW}. Thus, equality in the Willmore inequality cannot occur for any $M_{\mu,c}$. Consequently, $\mathcal{W}(M_{\mu,c})>\pi^2$, for every $\mu<0$ and $c>0$.
	
	Finally, the same argument as in the last part of the proof of Theorem \ref{t5} shows that the range of $\mathcal{W}(M_{\mu,c})$ is exactly $(\pi^2,\infty)$.
\end{proof}

Doubling the annuli $M_{\mu,c}$ across the plane $\{z=0\}$ produces a family of $\mathcal{C}^2$ rotational tori $\widetilde{M}_{\mu,c}$ whose Willmore energies realize every value in $(2\pi^2,\infty)$. The optimal value $2\pi^2$ is not attained within this family, but it is approached along the subfamily $c=-\mu/\sqrt{2}$ as $\mu\to-\infty$. Since the Willmore functional is quadratic in the second fundamental form, $\mathcal{W}^{2,2}$ is the natural regularity in which to expect the tori to converge as their Willmore energies converge. The following result establishes the stronger conclusion that these tori converge in $\mathcal{W}^{2,p}$ for every $1\leq p<\infty$.

\begin{theorem}\label{propnew} Let $\mu<0$ and $c=-\mu/\sqrt{2}$. As $\mu\to-\infty$, the family of rotational tori $\widetilde{M}_{\mu,c}$ converges in $\mathcal{W}^{2,p}$, for every $1\leq p<\infty$, to the rotational Clifford torus. The convergence is smooth locally away from the reflection plane $\{z=0\}$, but cannot be strengthened to global convergence in $\mathcal{W}^{2,\infty}$.
\end{theorem}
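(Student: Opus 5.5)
We parametrize the generating curve $\gamma_{\mu,c}$ explicitly, rescale, and pass to the limit $\mu\to-\infty$. The governing identity is \eqref{EL}. With $c=-\mu/\sqrt2$ we set $\kappa=(-\mu)^{-1}k$, or more precisely use the same rescaling $x=\alpha y^{1/(1-\mu)}$ that appears in the proof of Theorem \ref{t5}. A change of scale in arc length then turns \eqref{EL} into an equation whose coefficients converge as $\mu\to-\infty$.

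\textbf{Step 1: the limiting profile.} As $\mu\to-\infty$, the factor $\kappa^{-2\mu/(1-\mu)}$ behaves like $\kappa^{2}$ up to controlled errors. The rescaled first integral then converges to an ODE whose solution is the hyperbolic curvature of the generating circle of the Clifford torus, viewed in $\mathbb H^2=\{z>0\}$. The profile in question is the function $\kappa_2=\sqrt2\sin s/(\sqrt2+\cos s)$ computed in the proof of Theorem \ref{cor}. We must also check that the warping function \eqref{f} converges to the Euclidean rotation radius $\sqrt2+\cos s$ divided by $z$, after a Euclidean similarity fixing the normalization. This reduces the geometry to convergence of a one-parameter family of planar curves $\bar\gamma_\mu$ in the Euclidean half-plane. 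The consistency check is that \eqref{limit} at $t=-1/\sqrt2$ gives exactly $\pi^2$.

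\textbf{Step 2: convergence of the curves.} We write the Euclidean curvature of $\bar\gamma_\mu$ via $\kappa=z\bar\kappa+\bar\xi_z$ and the Euclidean arclength. We then show the following.
\begin{itemize}
\item[(i)] On compact subsets of $\{z>0\}$ the data $(\kappa,f,z)$ converge in $C^\infty$, because the ODE coefficients converge smoothly there. This gives smooth local convergence of the rotational surfaces away from the reflection plane.
\item[(ii)] Globally, $\bar\kappa_\mu$ is uniformly bounded and converges pointwise to the constant $-1$ of the Clifford circle.
\end{itemize}
Together these imply $C^{1,\beta}$ convergence of the curves and $L^p$ convergence of $\bar\kappa_\mu$ by dominated convergence, for every $p<\infty$. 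Since the surfaces are rotational, the second fundamental form is determined by $\bar\kappa$ and by $\bar\xi$ divided by the radius, and the radius stays bounded below because the curves avoid the axis. Hence we obtain $\mathcal W^{2,p}$ convergence of the doubled tori $\widetilde M_{\mu,c}$ for all $1\le p<\infty$.

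\textbf{Main obstacle: the thin layer near $z=0$, and the failure of $\mathcal W^{2,\infty}$.} The difficulty is the boundary layer near $z=0$, where $\kappa\to0$. There $\kappa^{-2\mu/(1-\mu)}$ is not uniformly close to $\kappa^2$: the exponent tends to $2$, but $\kappa^{2+\delta}$ and $\kappa^2$ differ non-uniformly relative to their size as $\kappa\to0$. We will first establish a uniform $L^\infty$ bound on $\bar\kappa_\mu$ from \eqref{EL} through the relation $\bar\kappa=(\kappa-\bar\xi_z)/z$. We will then show that the set where $|\bar\kappa_\mu+1|>\varepsilon$ has Euclidean length tending to zero, which suffices for $L^p$ convergence.

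For the negative claim, we evaluate $\bar\kappa_\mu$ on $\{z=0\}$. Orthogonality together with the reflected $\mathcal C^2$ structure gives $\bar\kappa_\mu$ there in terms of $\kappa_s$ at $\kappa=0$. By \eqref{EL}, this boundary value is a fixed constant different from $-1$: the factor $(1-\mu)/\mu$ tends to $-1$ and the limiting equation shows it stays away. Consequently $\sup|\bar\kappa_\mu-(-1)|\not\to0$, which rules out $\mathcal W^{2,\infty}$ convergence.
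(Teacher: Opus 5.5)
\textbf{The $\mathcal{W}^{2,\infty}$ obstruction.} Your argument for the negative claim does not work as proposed. On $\{z=0\}$ one has $\kappa=\kappa_s=0$. Differentiating $\kappa=z\bar\kappa+\bar\xi_z$ with respect to Euclidean arclength gives $\kappa_{\bar s}=z\,\bar\kappa_{\bar s}$, so the boundary value of $\bar\kappa$ cancels out of any such relation. It cannot be read off from $\kappa_s$ at $\kappa=0$.

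There are two further reasons it cannot be a local invariant. First, the dilations $p\mapsto\lambda p$ are hyperbolic isometries commuting with the rotation: they preserve \eqref{EL} and all hyperbolic data near the ends, but multiply $\bar\kappa$ by $\lambda^{-1}$. Second, since $\kappa$ is even, both ends carry identical local data in \eqref{EL}, yet their limiting Euclidean curvatures are $\sqrt2-1$ and $\sqrt2+1$ (Lemma \ref{lemma3}). So there is no single constant to extract from \eqref{EL} at $\kappa=0$; the endpoint value depends on where the endpoint sits.

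The exponent in \eqref{EL} plays a different role. The end asymptotics give $\lvert\kappa_s/\kappa\rvert\to(1-\mu)/(-\mu)>1$, while $\lvert z_s\rvert\le z$ in hyperbolic arclength, so $\kappa/z\to0$ at both ends. Since $\kappa_1-\kappa_2=z(\bar\kappa_1-\bar\kappa_2)$, the meridian and rotational Euclidean curvatures satisfy $\bar\kappa_1-\bar\kappa_2=(1-\mu)\kappa/z\to0$. Hence every $\widetilde M_{\mu,c}$ is Euclidean umbilic along $\{z=0\}$, and there $\lvert\bar\kappa_1\rvert=1/r$, with $r$ the distance from the endpoint to the axis. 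By $\mathcal{C}^0$ convergence $r\to\sqrt2\pm1$, so the endpoint curvatures tend to $\sqrt2\mp1\neq1$. This is exactly the paper's $\bar\kappa(0)=e^{-\phi(0)}$, since the endpoint is $e^{\phi(0)}(1,0,0)$. The paper obtains it from $x_\tau(0)=0$, hence $\phi_\tau(0)=\phi_{\tau\tau}(0)=0$, together with the uniform convergence of $\phi$ on $[0,\epsilon]$.

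\textbf{The positive part.} This has the same skeleton as the paper: the rescaling $x=\mu\kappa$, the limit ODE, $\mathcal{C}^\infty$ convergence on compact arclength intervals, then a uniformly bounded, a.e.\ convergent curvature near the ends. However, the decisive step is only announced. Near $z=0$ the quotient $(\kappa-\bar\xi_z)/z$ is $0/0$, and bounding $\bar\xi_z/z$ there is the same as bounding $\bar\kappa$, so the proposed route is circular. Convergence on compact $s$-intervals also says nothing about $s\gtrsim\lvert\mu\rvert$, which is where the layer sits.

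The missing idea is \eqref{f}, which ties $\kappa$ algebraically to the warping function. With $\tau=1/f$:
\begin{itemize}
\item one gets the explicit formula $x=\mu(\sqrt2/(-\mu))^{1-1/\mu}\tau^{1-1/\mu}$;
\item \eqref{EL} becomes $\tau_s=-\tau\sqrt{1-x^2+\tau^2}$;
\item the curve is $e^{\phi}(1,0,\tau)/\sqrt{1+\tau^2}$ with $\phi_\tau$ explicit.
\end{itemize}
Then $x_\tau$ is uniformly bounded on $(0,\epsilon)$ and tends to $-\sqrt2$ for each $\tau>0$. This gives your $L^\infty$ bound and the a.e.\ convergence at once.

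\textbf{A correction to Step 1.} The generating circle of the Clifford torus is a hyperbolic geodesic ($\kappa_1=0$ in the proof of Theorem \ref{cor}), so $\kappa_{\mu,c}\to0$. What converges to $\sqrt2\sin s/(\sqrt2+\cos s)$ is $-\mu\kappa_{\mu,c}$, i.e.\ the rotational principal curvature. The contrast between $(1-\mu)\kappa/z$ being of order one in the bulk and vanishing at the ends is precisely the boundary layer.
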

\begin{proof}
	The rotational tori $\widetilde{M}_{\mu,c}$ are obtained by reflecting the annuli $M_{\mu,c}$ across the plane $\{z=0\}$. It therefore suffices to prove the convergence of the generating curves $\gamma_{\mu,c}$ of $M_{\mu,c}$.
	
	By Lemma \ref{lemma1}, the generating curves $\gamma_{\mu,c}$ converge smoothly locally away from their endpoints to the upper semicircle of radius one centered at $(\sqrt{2},0,0)$. By Lemma \ref{lemma2}, this convergence extends to global convergence in $\mathcal{W}^{2,p}$, for every $1\leq p<\infty$, of the doubled generating curves. Rotating the limiting circle about the $z$-axis gives the rotational Clifford torus. The curvature obstruction in Lemma \ref{lemma3} rules out global convergence in $\mathcal{W}^{2,\infty}$.
\end{proof}

\begin{rem} 
By the Sobolev embedding theorem (see, for instance, \cite{M}), in our setting, convergence in $\mathcal{W}^{2,p}$ for every $1\leq p<\infty$ implies convergence in $\mathcal{C}^{1,\alpha}$ for every $\alpha\in(0,1)$. In particular, the convergence is globally $\mathcal{C}^1$. The $\mathcal{C}^1$ global convergence can also be deduced directly from the proof (see Lemma \ref{lemma2}). On the other hand, the curvature mismatch in Lemma \ref{lemma3} rules out convergence in $\mathcal{C}^{1,1}$, and hence also in $\mathcal{C}^2$.
\end{rem}

The convergence established in Theorem \ref{propnew} is illustrated in Figure \ref{F0}, which shows three tori $\widetilde{M}_{\mu,c}$ with $c=-\mu/\sqrt{2}$. As $\mu\to-\infty$, their generating curves converge in $\mathcal{W}^{2,p}$ to the circle of radius one centered at $(\sqrt{2},0,0)$, and the corresponding tori converge in $\mathcal{W}^{2,p}$ to the rotational Clifford torus.

\begin{figure}[h!]
	\centering
	\includegraphics[height=5.25cm]{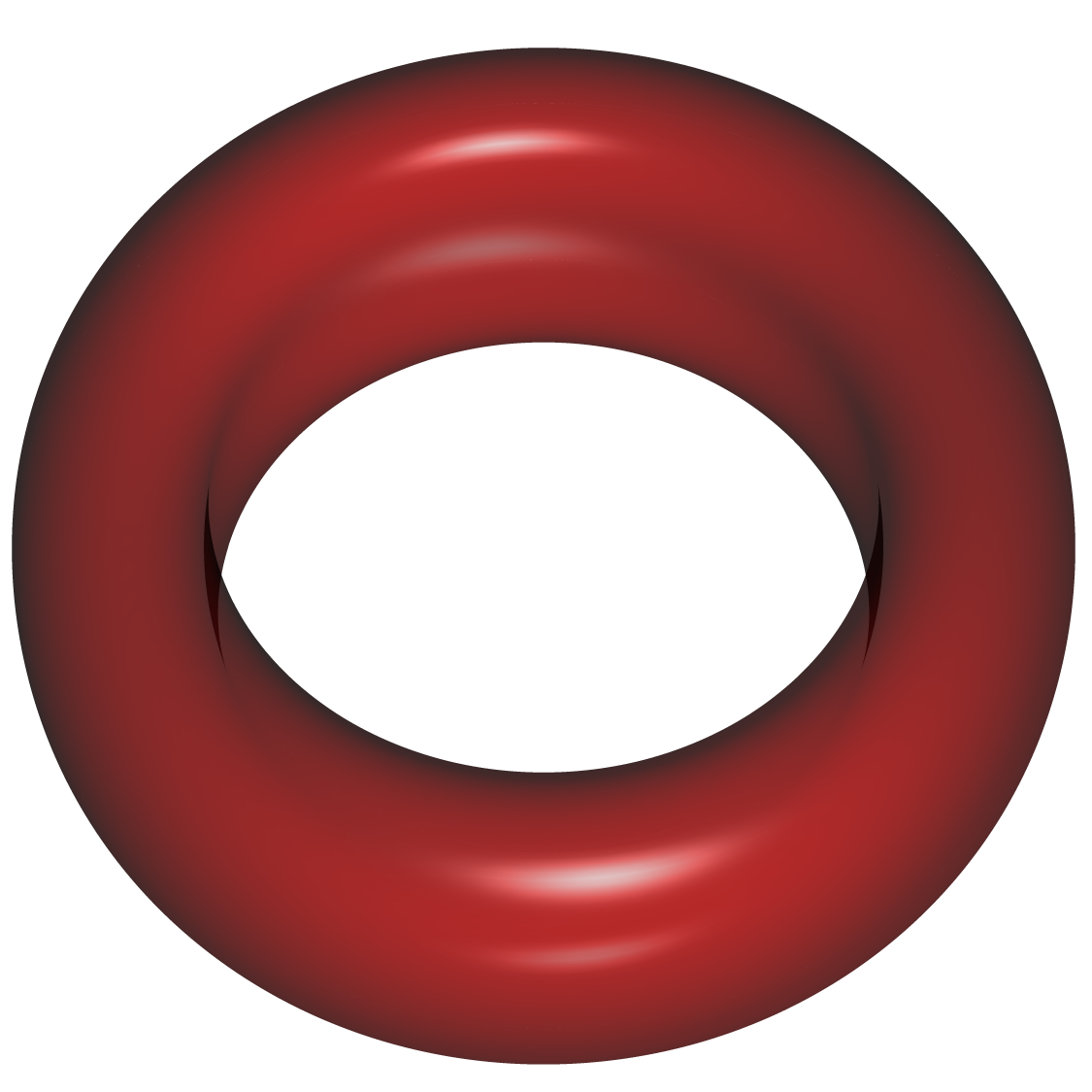}\,\,\,
	\includegraphics[height=5.25cm]{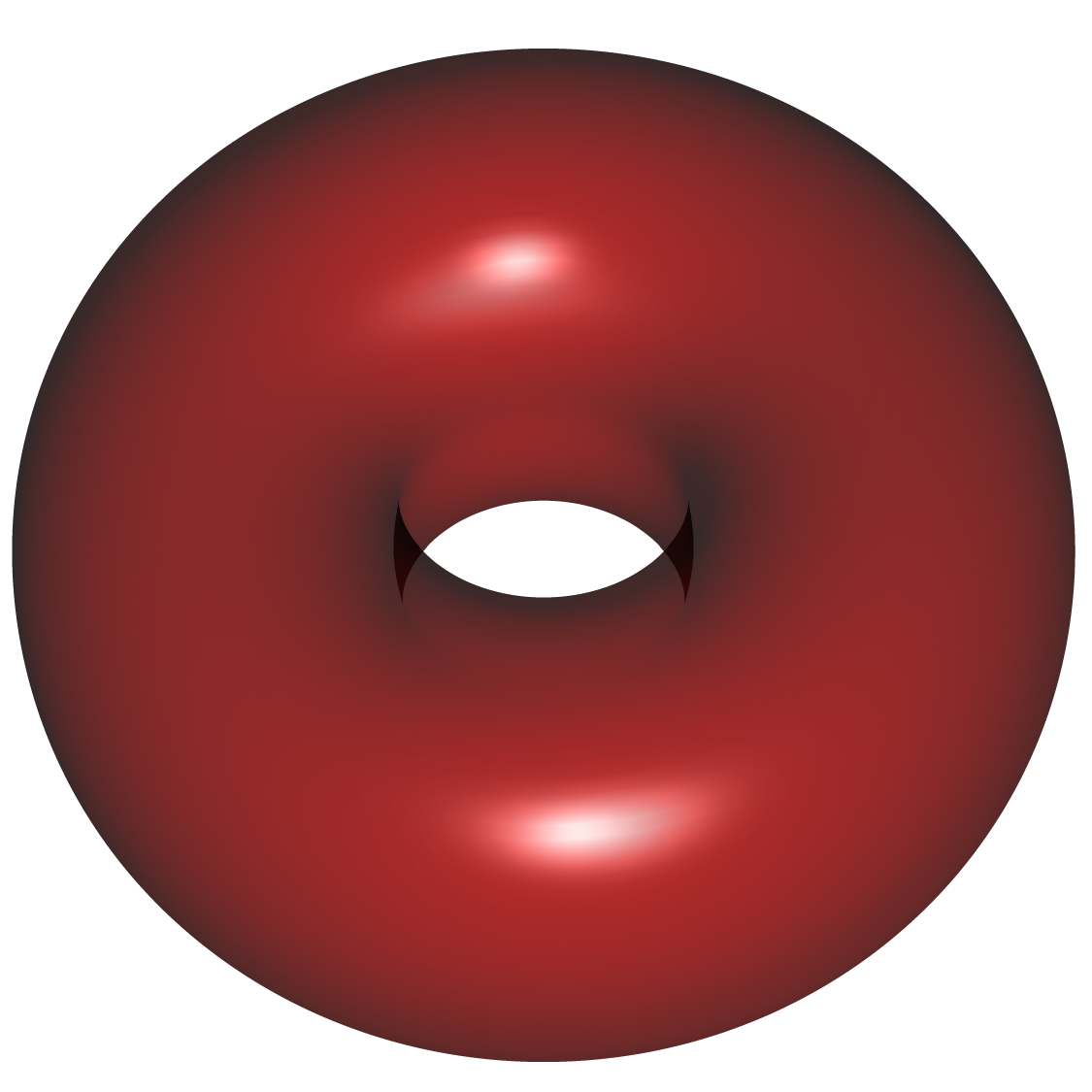}\,\,\,
	\includegraphics[height=5.25cm]{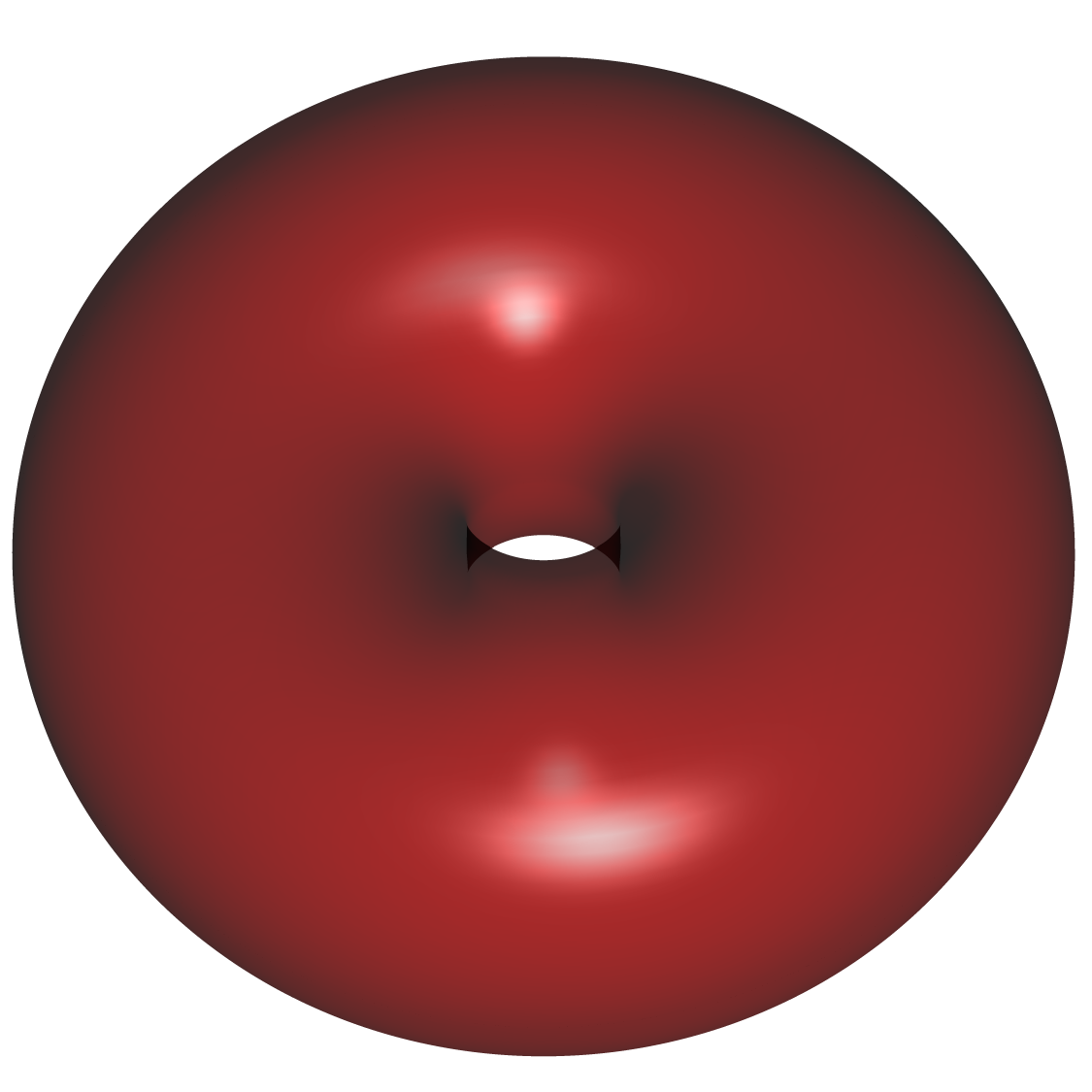}
	\caption{\small{Three rotational tori $\widetilde{M}_{\mu,c}$, with $c=-\mu/\sqrt{2}$, obtained by reflecting the annuli $M_{\mu,c}$ across the plane $\{z=0\}$. From left to right: $\mu=-0.6$, $\mu=-2$, and $\mu=-6$, respectively. The family converges in $\mathcal{W}^{2,p}$ to the rotational Clifford torus as $\mu\to-\infty$.}}
	\label{F0}
\end{figure}

In conclusion, the doubled linear Weingarten annuli $\widetilde{M}_{\mu,c}$ provide a geometrically distinguished family of rotational tori realizing every Willmore energy in $(2\pi^2,\infty)$. Although the optimal value $2\pi^2$ is not attained within the family, it is approached both in Willmore energy and geometrically through $\mathcal{W}^{2,p}$, for every $1\leq p<\infty$, convergence to the Clifford torus.

\section{Appendix A. Generalized Elastic Curves in $\mathbb{H}^2$}

The purpose of this appendix is to describe the geometric properties of the generating curves of the rotational linear Weingarten surfaces considered in Sections \ref{LWA} and \ref{5}.

We use the hyperboloid model of the hyperbolic plane. Let $(u,v,w)$ be the standard coordinates on $\mathbb{R}^3$, endowed with the Lorentzian metric $\widetilde{g}=du^2+dv^2-dw^2$. The resulting Lorentz--Minkowski space is denoted by $\mathbb{L}^3$. The hyperbolic plane is realized as the upper sheet of the two-sheeted hyperboloid 
$$\mathbb{H}^2 = \{ (u,v,w)\in\mathbb{L}^3 \,\lvert\, u^2+v^2-w^2=-1, w>0\}\,,$$
endowed with the Riemannian metric induced by $\widetilde{g}$.

For visualization purposes, this model can be identified with the Poincar\'e disk by means of the isometry 
$$(u,v,w) \longmapsto \frac{1}{1+w}(u,v)\,.$$ 
It can also be identified with the upper half-plane model $\mathbb{H}^2=\{ (x,0,z)\in\mathbb{H}^3 \,\lvert\, z>0\}$ used throughout the paper by means of
$$(u,v,w) \longmapsto \frac{1}{w-v}(u,0,1)\,.$$

Let $\gamma:I\longrightarrow\mathbb{H}^2$ be an arc length parameterized curve with positive, non-constant curvature $\kappa$. For constants $\mu<0$ and $c>0$, suppose that $\kappa$ satisfies
\begin{equation}\label{ELrepeat}
	\left(\kappa_s\right)^2=\frac{(1-\mu)^2\kappa^2}{\mu^2}\,\widehat{Q}_{\mu,c}(\kappa)\,,
\end{equation}
where $\widehat{Q}_{\mu,c}(\kappa)=-\mu^2\kappa^2+c^2\kappa^{-2\mu/(1-\mu)}+1$. Such a curve will be called a generalized elastic curve.

An elementary analysis of $\widehat{Q}_{\mu,c}$ shows that it has a unique positive zero $\widehat{\alpha}>0$ and that $\kappa^2\widehat{Q}_{\mu,c}(\kappa)\geq 0$ if and only if $\kappa\in[0,\widehat{\alpha}]$. Thus, the curvature of a generalized elastic curve takes values in $[0,\widehat{\alpha}]$. After choosing the origin of the arc length parameter so that $\kappa(0)=\widehat{\alpha}$, uniqueness for the non-constant solution of the corresponding initial value problem shows that $\kappa$ is even. Moreover, $\kappa$ increases strictly from $0$ to $\widehat{\alpha}$ on $(-\infty,0)$ and decreases strictly from $\widehat{\alpha}$ to $0$ on $(0,\infty)$. In particular, 
$$\lim_{s\to\pm\infty}\kappa(s)=0\,.$$
The symmetry and monotonicity of $\kappa$ justify the change of variable used in the proof of Theorem \ref{t5}.

Adapting the computations of \cite{LS2} (see also Section 3.2.1 of \cite{Pa}) to the present case, one concludes that a generalized elastic curve in $\mathbb{H}^2$ can be parameterized in terms of its arc length parameter, up to an ambient isometry, as
$$\gamma_{\mu,c}(s)=\frac{1}{c}\left(\kappa^{\mu/(1-\mu)},\sqrt{c^2+\kappa^{2\mu/(1-\mu)}}\,\sinh \phi(s),\sqrt{c^2+\kappa^{2\mu/(1-\mu)}}\,\cosh \phi(s)\right),$$
where $\kappa=\kappa_{\mu,c}(s)$ is the curvature of $\gamma_{\mu,c}$ and
$$\phi(s)=-c\mu \int_0^s \frac{\kappa^{1/(1-\mu)}}{c^2+\kappa^{2\mu/(1-\mu)}}\,d\sigma\,.$$
The function $\phi$ describes the hyperbolic angular progression of the curve.

Since $\kappa_{\mu,c}$ is even and the integrand defining $\phi$ is even, the function $\phi$ is odd. It follows from the above parameterization that $\gamma_{\mu,c}(-s)=(u(s),-v(s),w(s))$. Thus, $\gamma_{\mu,c}$ is symmetric with respect to the geodesic $\{v=0\}\cap\mathbb{H}^2$, and its two branches correspond to $s<0$ and $s>0$, respectively. Moreover, 
$$\phi_s=-c\mu \frac{\kappa^{1/(1-\mu)}} {c^2+\kappa^{2\mu/(1-\mu)}} >0\,,$$ 
since $\mu<0$ and $c>0$. Hence $\phi$ is strictly increasing, and the sign of the second coordinate of $\gamma_{\mu,c}(s)$ agrees with the sign of $s$. In addition, the first coordinate 
$$u(s) = \frac{1}{c} \kappa(s)^{\mu/(1-\mu)}\,,$$ 
is strictly monotone on each branch. It follows that each branch is embedded and that the two branches meet only at $s=0$. Therefore, $\gamma_{\mu,c}$ is embedded.

Furthermore, since the first coordinate of $\gamma_{\mu,c}$ is strictly positive for every $s\in\mathbb{R}$, the curve does not intersect the geodesic $\{u=0\}\cap\mathbb H^2$, which corresponds to the axis of rotation. Recall that $\kappa_{\mu,c}(s)\to 0$ as $s\to\pm\infty$ and that $\mu/(1-\mu)<0$, hence, the parameterization also shows that the two branches escape every compact subset of $\mathbb{H}^2$ and converge to two symmetric points of its ideal boundary. Thus, $\gamma_{\mu,c}$ is complete and properly embedded.

Projecting the curve to the Poincar\'e disk and taking the limit as $s\to\pm\infty$, one finds that its tangent vector becomes parallel to the radial direction at each ideal endpoint. Hence, $\gamma_{\mu,c}$ approaches the ideal boundary at a right angle at both ends. Under the standard identification with the upper half-plane model, the curve therefore approaches $\{z=0\}$ at a right angle.

\begin{figure}[h!]
	\centering
	\includegraphics[height=5.58cm]{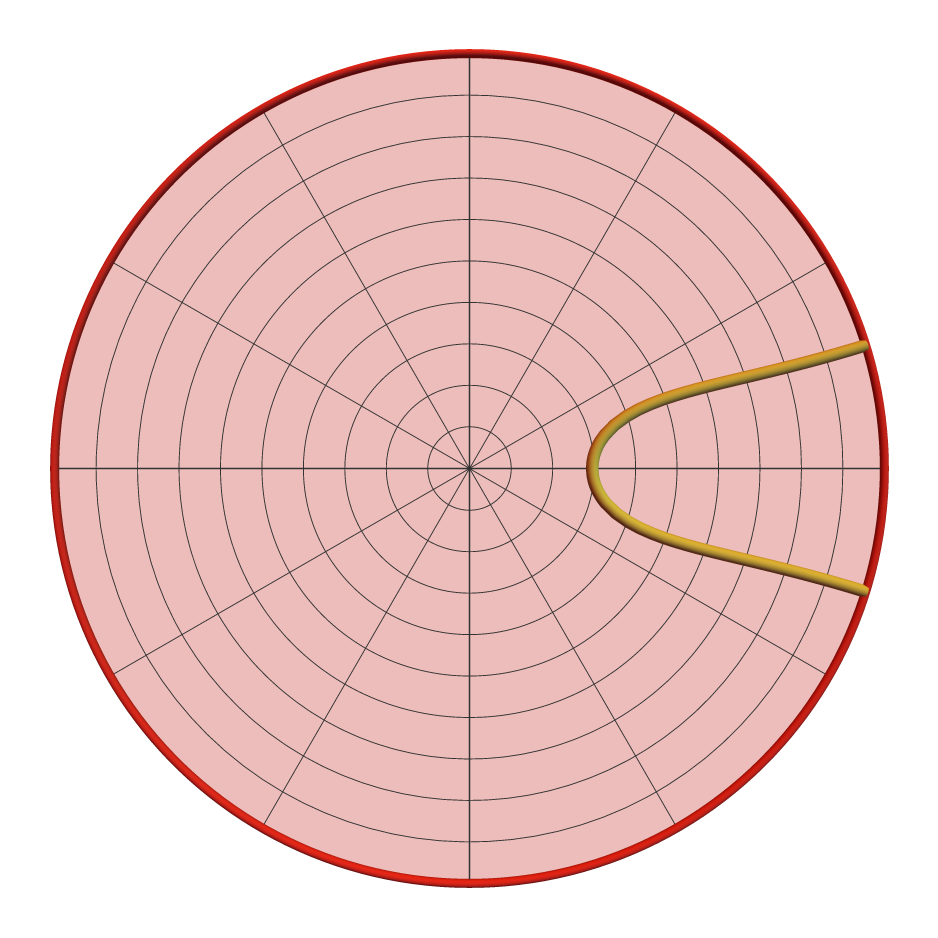}
	\includegraphics[height=5.58cm]{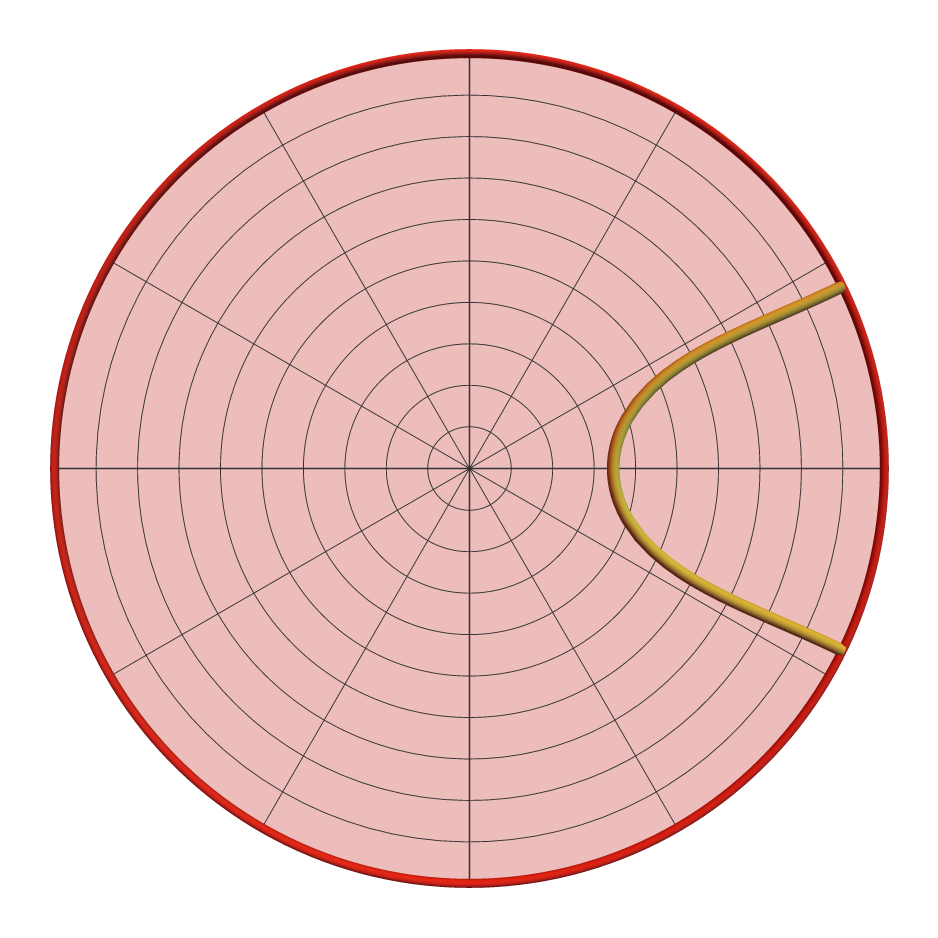}
	\includegraphics[height=5.58cm]{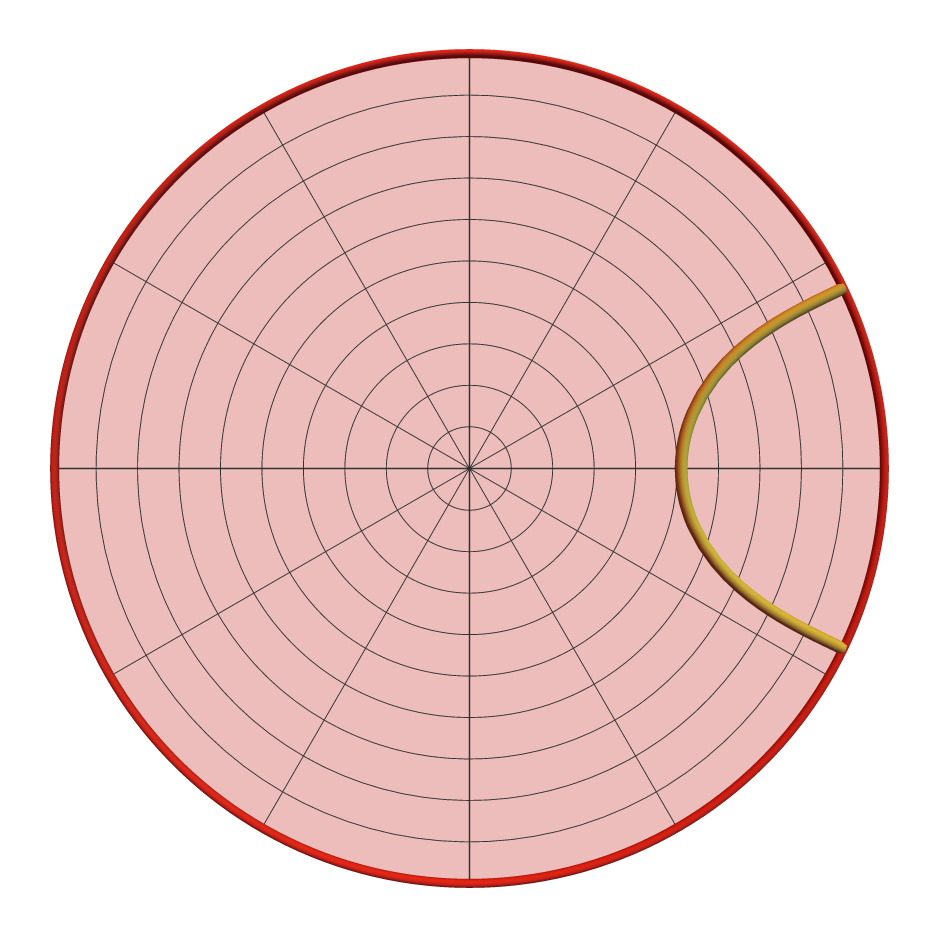}
	\caption{\small{Generalized elastic curves $\gamma_{\mu,c}$ represented in the Poincar\'e disk model of $\mathbb{H}^2$. From left to right, the parameters are $(\mu,c)=(-1/2,1)$, $(\mu,c)=(-1,1)$, and $(\mu,c)=(-2,1)$, respectively.}}
	\label{F1}
\end{figure}

Figure \ref{F1} shows three generalized elastic curves $\gamma_{\mu,c}$ in the Poincar\'e disk model of $\mathbb{H}^2$. It illustrates their symmetry and embeddedness, the decay of their curvatures toward both ends, and their orthogonal approach to the ideal boundary $\partial_\infty\mathbb{H}^2$.

We now rotate $\gamma_{\mu,c}$ about the geodesic $\{u=0\}\cap\mathbb{H}^2$. Identifying $\gamma_{\mu,c}(s) = (u(s),v(s),w(s)) \in\mathbb{H}^2$ with $\gamma_{\mu,c}(s) = (u(s),0,v(s),w(s)) \in\mathbb{H}^3\subset\mathbb{L}^4$, the associated rotational surface $M_{\mu,c}$ is parameterized by
$$X_{\mu,c}(s,\theta)=\mathcal{R}(\theta)\cdot\gamma_{\mu,c}(s)\subset\mathbb{H}^3\subset\mathbb{L}^4\,,$$
where $\mathcal{R}(\theta)$ is the $4\times 4$ orthogonal matrix
$$\mathcal{R}(\theta)=\begin{pmatrix} \cos\theta & -\sin\theta & 0 & 0 \\ \sin\theta & \cos\theta & 0 & 0 \\ 0 & 0 & 1 & 0 \\ 0 & 0 & 0 & 1\end{pmatrix}.$$
Since $\gamma_{\mu,c}$ is complete, properly embedded, disjoint from the axis of rotation, and approaches the ideal boundary at a right angle at both ends, its rotational surface $M_{\mu,c}$ is a properly embedded topological annulus approaching $\partial_\infty\mathbb{H}^3$ at a right angle. This proves the first assertion of Theorem \ref{prop}.

The rotational annuli $M_{\mu,c}$ generated by the curves in Figure \ref{F1} are shown in Figure \ref{F2} after identifying the hyperboloid model of $\mathbb{H}^3$ with the Poincar\'e ball model. These representations illustrate the proper embeddedness and orthogonal boundary behavior established above. The first surface $M_{\mu,c}$, corresponding to $(\mu,c)=(-1/2,1)$, is also displayed in the upper half-space model of $\mathbb{H}^3$ on the front page.

\begin{figure}[h!]
	\centering
	\includegraphics[height=5.5cm]{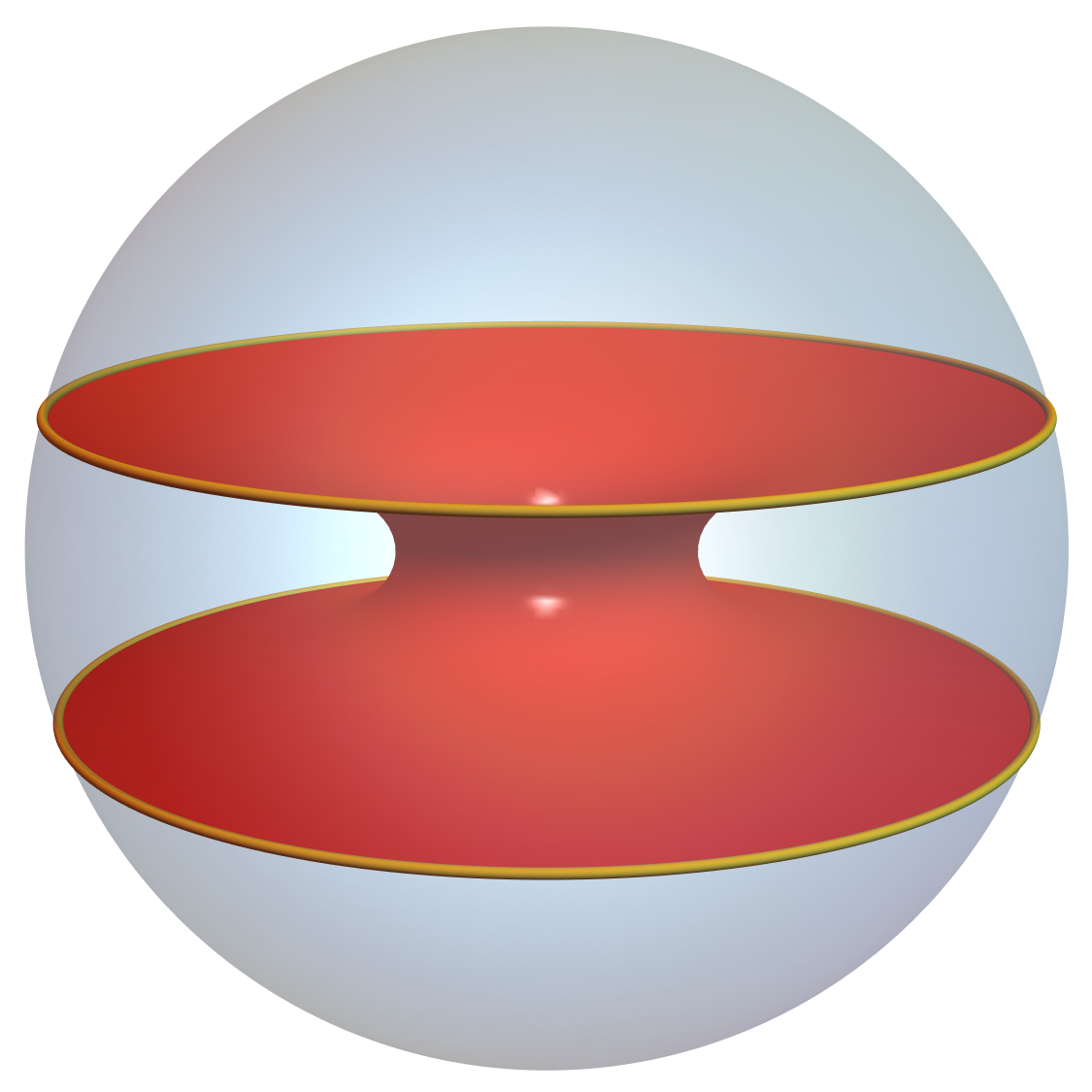}\,
	\includegraphics[height=5.5cm]{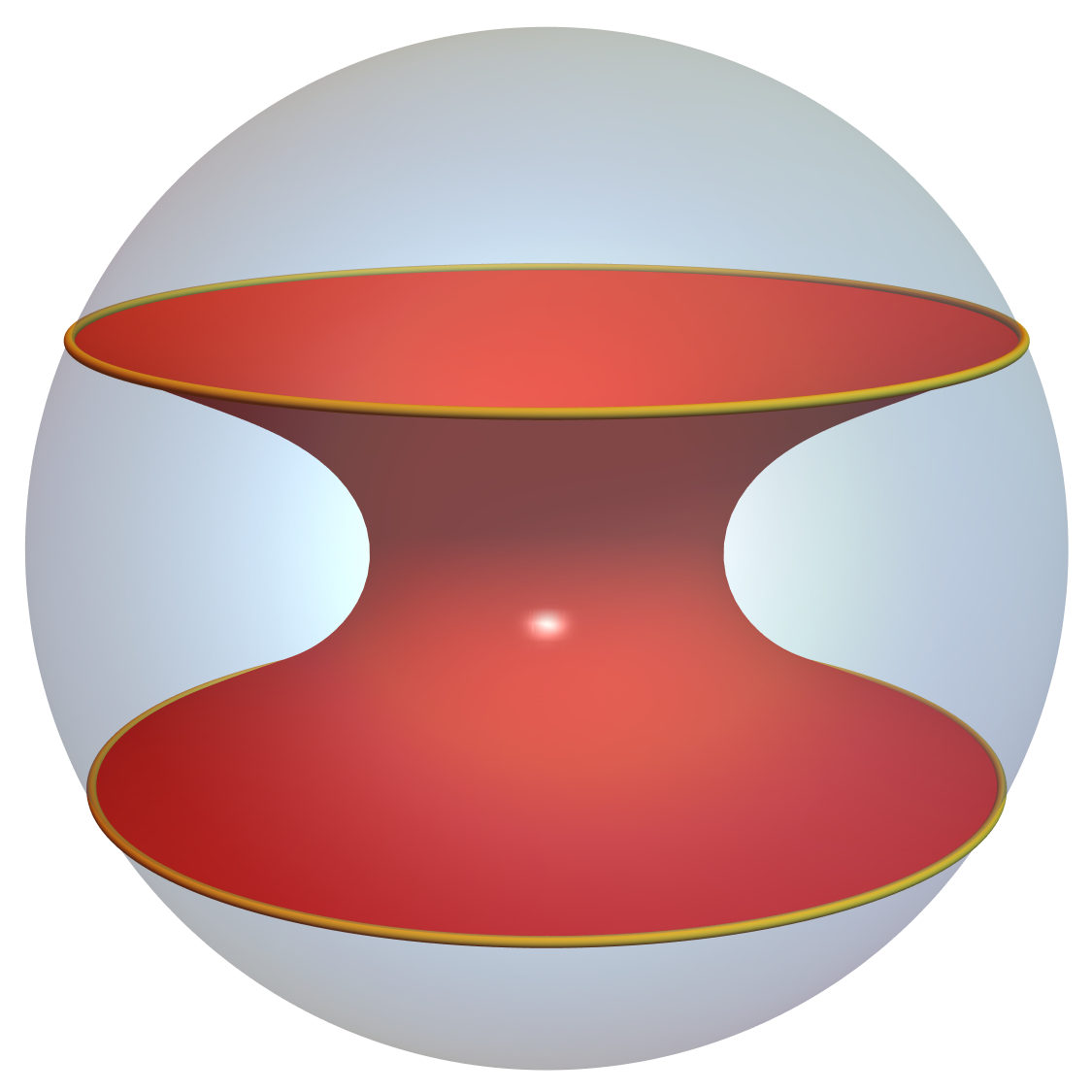}\,
	\includegraphics[height=5.5cm]{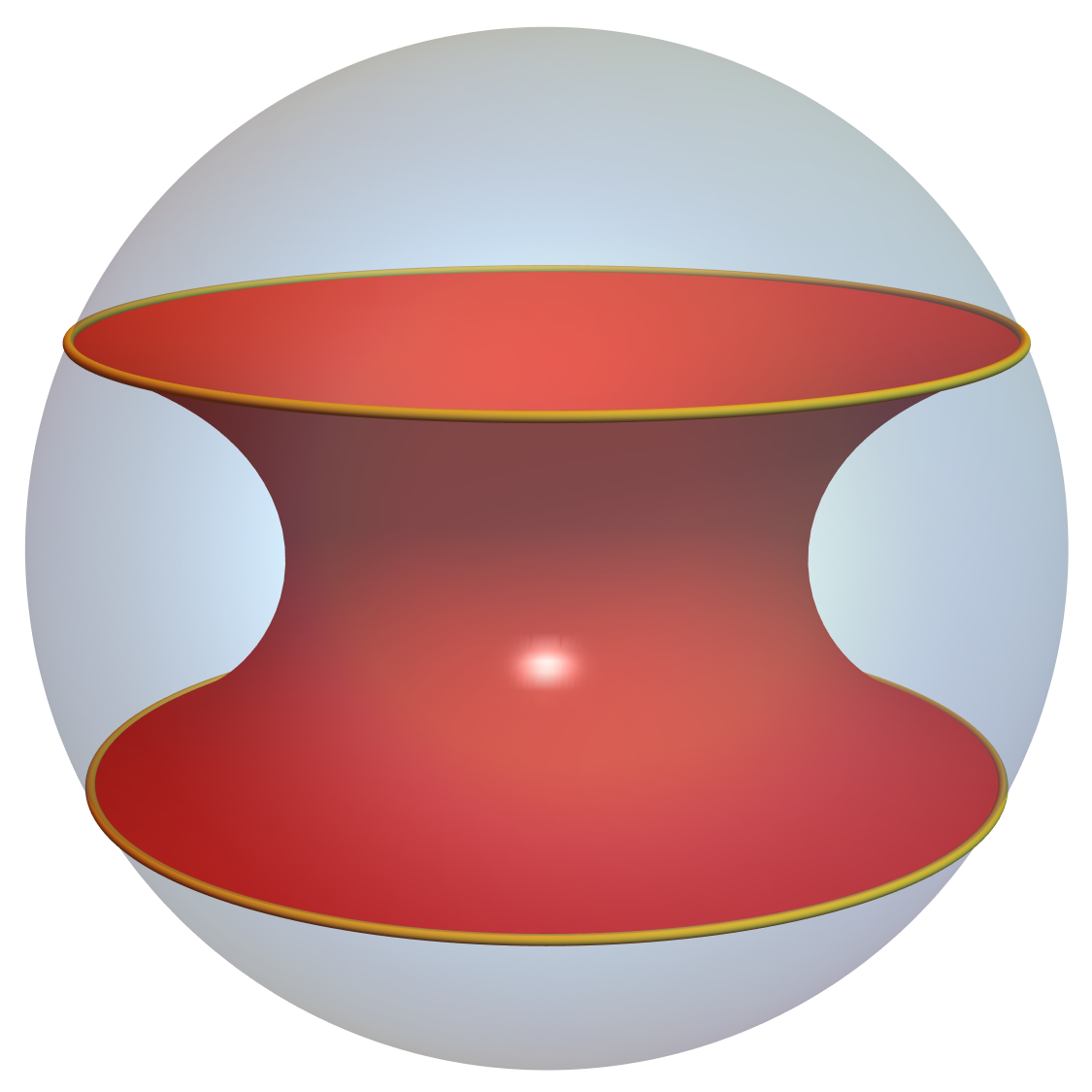}
	\caption{\small{The rotational annuli $M_{\mu,c}$ in the Poincar\'e ball model of $\mathbb{H}^3$, generated by the generalized elastic curves shown in Figure \ref{F1}. From left to right, the parameters are $(\mu,c)=(-1/2,1)$, $(\mu,c)=(-1,1)$, and $(\mu,c)=(-2,1)$.}}
	\label{F2}
\end{figure}

\section{Appendix B. Convergence of a Class of Generalized Elastic Curves}

The purpose of this appendix is to establish the convergence properties of the generalized elastic curves associated with the subfamily $c=-\mu/\sqrt{2}$ as $\mu\to-\infty$. These results provide the technical ingredients needed to prove Theorem \ref{propnew}: smooth convergence away from the endpoints, global convergence in $\mathcal{W}^{2,p}$ for every $1\leq p<\infty$, and the failure of convergence in $\mathcal{W}^{2,\infty}$ at the endpoints. Throughout this appendix, we assume that $\mu<0$ and set $c=-\mu/\sqrt{2}$. 

We begin by determining the limiting curve away from its endpoints. Using the arc length parameter $s$ introduced in Appendix A, we show that the generalized elastic curves $\gamma_{\mu,c}(s)$ converge smoothly on every compact interval to a semicircle.

\begin{lemma}\label{lemma1}
	Let $\mu<0$ and $c=-\mu/\sqrt{2}$. As $\mu\to-\infty$, the generalized elastic curves $\gamma_{\mu,c}$ converge in $\mathcal{C}^\infty$ on every compact interval to
	$$\widetilde{\gamma}(s)=\frac{1}{\sqrt{2}-\tanh s}\left(1,0,{\rm sech}\,s\right),$$
	which parameterizes the upper semicircle of radius one centered at $(\sqrt{2},0,0)$.
\end{lemma}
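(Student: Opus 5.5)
The plan is to change the dependent variable so that the first integral \eqref{ELrepeat} has a regular limit as $\mu\to-\infty$ along $c=-\mu/\sqrt{2}$. Then we pass to the limit in the explicit parameterization of $\gamma_{\mu,c}$ from Appendix A, and finally map the limit to the upper half-plane.

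\emph{Step 1: the new variable.} Since $u(s)=\frac1c\kappa^{\mu/(1-\mu)}$ is the first coordinate of $\gamma_{\mu,c}$, I will use
$$m=c\,\kappa^{-\mu/(1-\mu)},$$
so that $u=1/m$ and $\sqrt{c^2+\kappa^{2\mu/(1-\mu)}}/c=\sqrt{1+m^{-2}}$. Differentiating and substituting \eqref{ELrepeat} should give
$$(m_s)^2=m^2\left(1+m^2-\mu^2\kappa^2\right),\qquad \mu^2\kappa^2=\mu^2\left(\tfrac{\sqrt2\,m}{-\mu}\right)^{2(1-\mu)/(-\mu)}=:2m^2\,E_\mu(m).$$
Since $2(1-\mu)/(-\mu)\to2$ and $(-\mu)^{-2/(-\mu)}\to1$, we have $E_\mu\to1$ in $\mathcal{C}^\infty$ on compact subsets of $(0,\infty)$. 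So the limiting first integral is $(m_s)^2=m^2(1-m^2)$. The normalization $\kappa(0)=\widehat\alpha$ makes $m(0)$ the positive root of $1+m^2=2m^2E_\mu(m)$, which tends to $1$. The even solution of the limiting equation with maximum $1$ at $s=0$ is $m=\operatorname{sech}s$.

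\emph{Step 2: convergence of $m$.} Instead of the first-order equation, which degenerates at the maximum, I will work with the second-order equation obtained by differentiating: $m_{ss}=\tfrac12\partial_m\big[m^2(1+m^2-2m^2E_\mu(m))\big]$, with data $m(0)=m_\mu(0)\to1$ and $m_s(0)=0$. Its right-hand side converges smoothly to that of the limiting equation $m_{ss}=m-2m^3$. Continuous dependence on parameters then gives $m_\mu\to\operatorname{sech}s$ in $\mathcal{C}^k$ on compacts, with higher derivatives handled by differentiating the equation. On compact intervals $m$ stays in a compact subset of $(0,\infty)$, so nothing degenerates there. I expect this to be the main technical point, mainly checking that the $\mu$-dependent exponents converge uniformly together with their derivatives.

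\emph{Step 3: convergence of $\phi$.} Write
$$\phi_s=\frac{-\mu\,\kappa^{1/(1-\mu)}}{c\,(1+m^{-2})}=\sqrt2\,\frac{m^2}{1+m^2}\,\kappa^{1/(1-\mu)}.$$
Here $\kappa^{1/(1-\mu)}=(m/c)^{-1/\mu}\to1$ smoothly on compacts, because $c^{1/\mu}=(-\mu/\sqrt2)^{1/\mu}\to1$. Hence $\phi\to\widetilde\phi$ in $\mathcal{C}^\infty_{loc}$, where $\widetilde\phi(s)=\int_0^s\sqrt2\,\operatorname{sech}^2\sigma/(1+\operatorname{sech}^2\sigma)\,d\sigma$. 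This integral is explicit: with $t=\tanh\sigma$ the integrand becomes $\sqrt2\,dt/(2-t^2)$, which gives $\widetilde\phi=\operatorname{arctanh}(\tanh s/\sqrt2)$.

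\emph{Step 4: the limit curve.} The hyperboloid coordinates converge to $\big(\cosh s,\ \sqrt{1+\cosh^2 s}\,\sinh\widetilde\phi,\ \sqrt{1+\cosh^2 s}\,\cosh\widetilde\phi\big)$. Since $\widetilde\phi=\operatorname{arctanh}(\tanh s/\sqrt2)$, one gets $\cosh\widetilde\phi-\sinh\widetilde\phi=\sqrt{(\sqrt2-\tanh s)/(\sqrt2+\tanh s)}$. Applying $(u,v,w)\mapsto(u,0,1)/(w-v)$ and simplifying, the limit should reduce to $\widetilde\gamma(s)=(1,0,\operatorname{sech}s)/(\sqrt2-\tanh s)$, after composing with an isometry if needed to match normalizations. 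Composition with this smooth map preserves $\mathcal{C}^\infty_{loc}$ convergence.

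Finally, one checks directly that $(x-\sqrt2)^2+z^2=1$ along $\widetilde\gamma$. This semicircle is the geodesic orthogonal to $\{z=0\}$ with endpoints $\sqrt2\pm1$, which is consistent with $\kappa\to0$.
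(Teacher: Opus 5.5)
Your proposal is correct and follows essentially the same route as the paper. Both arguments rescale the first integral so that it has a regular limit along $c=-\mu/\sqrt{2}$, pass to the differentiated second-order equation to avoid the degeneracy at the maximum, use continuous dependence to get $\mathcal{C}^\infty$ convergence on compact intervals, and then pass to the limit in $\phi$ and in the explicit parameterization. The only difference is your choice of variable $m=c\,\kappa^{-\mu/(1-\mu)}=1/f$ (the paper's $\tau$ from Lemma~\ref{lemma2}) instead of the paper's $x=\mu\kappa$, which makes the first integral $(m_s)^2=m^2\,\widehat{Q}_{\mu,c}(\kappa)$ slightly cleaner.

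Your hedge about composing with an isometry is unnecessary. The map $(u,v,w)\mapsto (u,0,1)/(w-v)$ applied to your limit gives $\widetilde{\gamma}(s)$ exactly, since $\cosh s/\sqrt{1+\cosh^2 s}=1/\sqrt{2-\tanh^2 s}$.
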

\begin{proof}
	After identifying the hyperboloid model of $\mathbb{H}^2$ with the upper half-plane model, it follows from the discussion in Appendix A that the curves $\gamma_{\mu,c}$ can be parameterized as
	$$\gamma_{\mu,c}(s)=\frac{1}{\sqrt{1+f^2}\left(\cosh\phi-\sinh\phi\right)}\left(f,0,1\right)=\frac{e^\phi}{\sqrt{1+f^2}}\left(f,0,1\right),$$
	where 
	$$\phi=-\int_0^s \frac{\mu \kappa f}{1+f^2}\,d\sigma\,,$$
	and, by \eqref{f} and the choice $c=-\mu/\sqrt{2}$,
	\begin{equation}\label{fproof}
		f=\frac{\sqrt{2}}{-\mu}\,\kappa^{\mu/(1-\mu)}\,.
	\end{equation}
	Throughout the proof, we suppress the dependence on $\mu$ for simplicity.
	
	The curvature $\kappa_{\mu,c}=\kappa$ of the generating curves $\gamma_{\mu,c}$ is the solution of the first-order ordinary differential equation \eqref{EL} with $c=-\mu/\sqrt{2}$ and initial condition $\kappa(0)=\widehat{\alpha}$, where $\widehat{\alpha}$ is its maximum value. Setting $x=\mu\kappa$, this differential equation becomes
	$$(x_s)^2=G_\mu(x)\,$$ 
	where 
	$$G_\mu(x) = \frac{(1-\mu)^2x^2}{\mu^2} \left( 1-x^2 +\frac{x^2}{2} \left(\frac{x}{\mu}\right)^{-2/(1-\mu)} \right).$$ 
	Along the non-constant solution, differentiating the identity $(x_s)^2=G_\mu(x)$ gives 
	$$x_{ss}=\frac{1}{2}G_\mu'(x)\,.$$ 
	This equation extends continuously to $s=0$. As $\mu\to-\infty$, the functions $G_\mu$ converge smoothly on compact subsets of $(-\infty,0)$ to 
	$$G(x)=x^2\left(1-\frac{x^2}{2}\right).$$ 
	Moreover, $x(0)=\mu\,\widehat{\alpha}\longrightarrow-\sqrt{2}$, and $x_s(0)=0$. Therefore, by smooth dependence of solutions on the parameter and initial data, $x$ converges in $\mathcal{C}^\infty$ on compact intervals to the unique solution of the initial value problem
	$$x_{ss}=\frac{1}{2}G'(x)=x-x^3\,, \quad\quad\quad x(0)=-\sqrt{2}\,, \quad x_s(0)=0\,.$$ 
	This solution is $x(s)=-\sqrt{2}\,{\rm sech}\,s$. Consequently, $\mu\kappa(s)\to -\sqrt{2}\,{\rm sech}\,s$.
	
	We next observe that 
	$$\kappa^{1/(1-\mu)}\longrightarrow 1\,.$$ 
	Indeed, 
	$$\log\left(\kappa^{1/(1-\mu)}\right)=\frac{1}{1-\mu}\,\log(\kappa)=\frac{1}{1-\mu}\,\log\left(\frac{x}{\mu}\right) = \frac{1}{1-\mu}\left(\log(-x)-\log(-\mu)\right) \longrightarrow 0\,,$$ 
	since $x<0$ remains bounded away from zero on every fixed compact interval. It follows that 
	$$f(s) = -\frac{\sqrt{2}}{\mu\kappa}\, \kappa^{1/(1-\mu)} \longrightarrow \cosh s\,.$$ 
	Moreover, 
	$$\phi_s = -\frac{\mu\kappa f}{1+f^2} \longrightarrow \frac{\sqrt{2}}{1+\cosh^2s}\,.$$ 
	Since $\phi(0)=0$, we obtain 
	$$\phi(s) \longrightarrow \widetilde{\phi}(s) = \sqrt{2} \int_0^s \frac{d\sigma}{1+\cosh^2\sigma} = {\rm arctanh} \left( \frac{1}{\sqrt{2}}\,{\rm tanh}\,s \right).$$
	All the limits above hold in $\mathcal{C}^\infty$ on compact intervals. 
	
	Therefore, the curves $\gamma_{\mu,c}$ converge in $\mathcal{C}^\infty$ on compact intervals to
	$$\widetilde{\gamma}(s)=\frac{e^{\widetilde{\phi}(s)}}{\sqrt{1+\cosh^2s}}\left(\cosh s, 0, 1\right)=\frac{1}{\sqrt{2}-\tanh s}\left(1,0,{\rm sech}\,s\right),$$
	where the last equality follows after straightforward simplifications. A direct computation gives
	$$\left(\frac{1}{\sqrt{2}-\tanh s}-\sqrt{2}\right)^2+\left(\frac{{\rm sech}\,s}{\sqrt{2}-\tanh s}\right)^2=1\,,$$
	which means that $\widetilde{\gamma}$ is the upper semicircle of radius one centered at $(\sqrt{2},0,0)$.
\end{proof}

Lemma \ref{lemma1} establishes smooth convergence on every compact interval and hence locally away from the endpoints, which correspond to $s\to\pm\infty$. The arc length parameter $s$ is not well suited to studying the convergence at these endpoints because they occur at infinite parameter values. We therefore introduce a new parameter that places both endpoints at a finite value. The next lemma shows that the convergence extends to the endpoints in $\mathcal{W}^{2,p}$, for every $1\leq p<\infty$.

\begin{lemma}\label{lemma2}
	Let $\mu<0$ and $c=-\mu/\sqrt{2}$. As $\mu\to-\infty$, the generalized elastic curves $\gamma_{\mu,c}$ converge in $\mathcal{W}^{2,p}$, for every $1\leq p<\infty$, up to their endpoints, to the upper semicircle of radius one centered at $(\sqrt{2},0,0)$.
\end{lemma}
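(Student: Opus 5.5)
We will reparameterize each curve by a parameter for which both endpoints sit at finite values. The natural choice is the Euclidean angle about the center of the limiting circle, or equivalently a variable tied to the curvature. Since $\kappa$ is monotone on each branch, I would use $t=\kappa/\widehat{\alpha}\in(0,1]$ on each branch, or better $x=\mu\kappa$ normalized by $x(0)$. Following the proof of Theorem~\ref{t5}, the substitution $\kappa=\widehat{\alpha}\,y^{1-\mu}$ makes the integrands converge to $1/\sqrt{1-y^2}$. Concretely, I will parameterize each branch by $y\in[0,1]$, with the endpoint at $y=0$ and the vertex at $y=1$. Then $s$ and $\phi$ become explicit integrals in $y$:
$$ds=\frac{\mu\,d\kappa}{(1-\mu)\kappa\sqrt{\widehat Q_{\mu,c}(\kappa)}},\qquad d\phi=-\frac{\mu\kappa f}{1+f^2}\,ds .$$
From these I will write $f$ and $\phi$, and hence the upper half-plane coordinates $\frac{e^\phi}{\sqrt{1+f^2}}(f,1)$ of $\gamma_{\mu,c}$, as functions of $y$. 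I will do the same for the limit semicircle, and then for the doubled curve, reflected across $\{z=0\}$, which is a closed curve in $\mathbb{R}^2$.

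The key steps, in order, are the following.

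\textbf{Step 1.} Show that in the $y$-variable the Euclidean position and its first derivative converge uniformly on $[0,1]$, including at $y=0$, to those of the circle. I will use dominated convergence with the same bounds as in the proof of Theorem~\ref{t5}: integrability near $y=1$ comes from $Q'(\alpha)<0$, and near $y=0$ the integrands stay bounded. This gives global $\mathcal{C}^1$ convergence. In particular the endpoints, that is the limits of $e^{\phi}f/\sqrt{1+f^2}$ as $y\to0$, converge to $\sqrt2\pm1$, and the tangents there are vertical for every $\mu$ because of the orthogonality established in Appendix A.

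\textbf{Step 2.} Pass to second derivatives through the Euclidean curvature. Rather than differentiate coordinates twice, I will reparameterize the doubled curves by Euclidean arc length, rescaled to a fixed interval, and control $\overline{\kappa}$. By the conformal law $\kappa=z\overline{\kappa}+\overline{\xi}_z$, we have $\overline{\kappa}=(\kappa-\overline{\xi}_z)/z$. Convergence in $\mathcal{W}^{2,p}$ of curves with $\mathcal{C}^1$-convergent position and tangent reduces to $\overline{\kappa}_\mu\to\overline{\kappa}_\infty\equiv-1$ in $L^p$ with respect to Euclidean arc length. On compact subarcs this already follows from Lemma~\ref{lemma1}.

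\textbf{Step 3.} The main obstacle is near the endpoints, where $z\to0$ and both $\kappa$ and $\overline{\xi}_z$ tend to $0$. There I must show that $\overline{\kappa}_\mu$ is uniformly bounded, since by Lemma~\ref{lemma3} it is not uniformly convergent. Uniform boundedness plus pointwise convergence off a set of small measure then gives $L^p$ convergence for every $p<\infty$ via dominated convergence, while only $L^\infty$ fails. To obtain the bound, I will first express $z$ and $\overline{\xi}_z$ in terms of $\kappa$ using the explicit parameterization and the relation $\overline{\xi}_z=$ (a function of $f$ and $\phi_s$). Then I will use the first integral \eqref{EL} to estimate the difference $\kappa-\overline{\xi}_z$ by a constant times $z$, uniformly for $\mu\ll0$, on a neighborhood $\{z<\delta\}$. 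If this direct estimate proves delicate, my fallback is to verify that $\overline{\kappa}_\mu$ has a finite limit at the endpoint, which exists since the doubled surface is $\mathcal{C}^2$. I would then show that $\overline{\kappa}_\mu$ is monotone, or that its oscillation on $\{z<\delta\}$ is bounded independently of $\mu$, again using monotonicity of $\kappa$ along each branch. Finally, the measure of $\{z<\delta\}$ tends to zero with $\delta$ uniformly in $\mu$, which follows from Step 1. This closes the $L^p$ estimate.
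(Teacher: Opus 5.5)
Your mechanism (a $\mu$-uniform bound on a second-order quantity near the endpoints, pointwise convergence away from them, then dominated convergence) is the same one the paper uses. However, Step 1 is false as stated, and the decisive Step 3 estimate is only announced.

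\textbf{Step 1 fails in your parameter.} The substitution in the proof of Theorem~\ref{t5} composes to $\kappa=\widehat{\alpha}\,y$ (since $\widehat{\alpha}=\alpha^{1-\mu}$), not $\kappa=\widehat{\alpha}\,y^{1-\mu}$. With the latter, $y=(\kappa/\widehat{\alpha})^{1/(1-\mu)}\to 1$ at every fixed $s$, so every point at bounded $s$ is pushed to $y=1$ and the $y$-parameterizations degenerate. With $y=\kappa/\widehat{\alpha}$, the $y$-derivatives are unbounded at both ends of $[0,1]$:
\begin{itemize}
\item At the vertex $\kappa_s=0$, so $ds/dy\sim(1-y)^{-1/2}$. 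This also happens for the limit curve, where $y={\rm sech}\,s$.
\item At the endpoint, \eqref{fproof} gives $1/f=C_\mu\,y^{-\mu/(1-\mu)}$ with $0<-\mu/(1-\mu)<1$. Since $z\approx e^{\phi}/f$ and the tangent is vertical, $\lvert d\gamma_{\mu,c}/dy\rvert\sim y^{-1/(1-\mu)}\to\infty$ for every finite $\mu$. Also $ds/dy\sim 1/y$ there, so not all your integrands are bounded near $y=0$.
\end{itemize}
Hence there is no uniform convergence of first derivatives on $[0,1]$, and what you later take from Step 1 (uniformly small end arcs) is not established.

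\textbf{The crux of Step 3 is missing.} The $\mu$-uniform bound on $\overline{\kappa}_\mu$ near the endpoints is the whole content of the lemma, and you only state it as a goal. Your fallback gives no uniformity in $\mu$: finiteness of the endpoint value for each fixed $\mu$ is automatic, and monotonicity or a $\mu$-independent oscillation bound is unsupported. This matters because $\overline{\kappa}_\mu$ passes from about $\sqrt{2}\mp1$ to about $1$ across a $\tau$-layer whose width is exponentially small in $\lvert\mu\rvert$, so any estimate must hold uniformly across that layer.

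The missing idea is the paper's parameter $\tau=1/f$.
\begin{itemize}
\item By \eqref{fproof}, $x=\mu\kappa$ becomes an exact power: $x/\tau=-\sqrt{2}\,\bigl(\sqrt{2}\,\tau/(-\mu)\bigr)^{-1/\mu}$. This is bounded by $\sqrt{2}$ on $(0,\epsilon)$, vanishes at $\tau=0$, and tends to $-\sqrt{2}$ for each fixed $\tau>0$.
\item By \eqref{EL}, $\phi_\tau=x/\bigl((1+\tau^2)\sqrt{1-x^2+\tau^2}\bigr)$.
\end{itemize}
With this, your conformal-law route closes. Write \eqref{gammatau} as $e^{\phi}(\cos\theta,\sin\theta)$ with $\tan\theta=\tau$. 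Then
\begin{itemize}
\item $\overline{\xi}_z/z=\pm(\phi_\theta/\tau-1)/\bigl(e^{\phi}\sqrt{1+\phi_\theta^2}\bigr)$, with $\phi_\theta/\tau=(x/\tau)/\sqrt{1-x^2+\tau^2}$;
\item $\kappa/z=(x/\tau)\sqrt{1+\tau^2}/(\mu e^{\phi})=O(1/\lvert\mu\rvert)$.
\end{itemize}
Thus $\overline{\kappa}_\mu=\kappa/z-\overline{\xi}_z/z$ is uniformly bounded near the endpoints and converges for each $\tau>0$, which is exactly what dominated convergence needs.

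The paper packages the same fact as boundedness and pointwise convergence of $x_\tau=(1-1/\mu)\,x/\tau$, hence $\mathcal{L}^p$ convergence of $\phi_{\tau\tau}$. In the parameter $\tau$ the position and first derivative converge uniformly up to $\tau=0$, so this also repairs Step 1.
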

\begin{proof}
 By Lemma \ref{lemma1}, the curves already converge smoothly, and hence in $\mathcal{W}^{2,p}$, away from their endpoints. We now regard the curves as Euclidean curves in the closed upper half-plane. It remains to prove convergence near the endpoints, which lie in the plane $\{z=0\}$. We introduce the parameter
 $$\tau=\frac{1}{f(s)}\,,$$
 so that each endpoint corresponds to $\tau=0$, and
 \begin{equation}\label{gammatau}
 	\gamma_{\mu,c}(\tau)=\frac{e^{\phi(\tau)}}{\sqrt{1+\tau^2}}\left(1,0,\tau\right).
 \end{equation}
 We will prove convergence in $\mathcal{W}^{2,p}(0,\epsilon)$, for some sufficiently small $\epsilon>0$, by showing that $\phi$ and $\phi_\tau$ converge uniformly on $[0,\epsilon]$, while $\phi_{\tau\tau}$ converges in $\mathcal{L}^p(0,\epsilon)$. Indeed, differentiating the above parameterization shows that the curves and their first derivatives depend smoothly on $\phi$ and $\phi_\tau$, whereas their second derivatives depend smoothly on these quantities and linearly on $\phi_{\tau\tau}$. Consequently, the stated convergence properties imply uniform convergence of the curves and their first derivatives, together with convergence in $\mathcal{L}^p(0,\epsilon)$ of their second derivatives, and hence convergence in $\mathcal{W}^{2,p}(0,\epsilon)$.
 
 We first consider the branch $s>0$. From \eqref{fproof} and $x=\mu\kappa$, we get
 \begin{equation}\label{x}
 	x=\mu\left(\frac{\sqrt{2}}{-\mu}\right)^{1-1/\mu}\tau^{1-1/\mu}.
 \end{equation}
 For every sufficiently small fixed $\epsilon>0$, $x\to-\sqrt{2}\,\tau$ uniformly on $[0,\epsilon]$.
 
 Differentiating \eqref{x}, we get
 $$x_\tau=(\mu-1)\left(\frac{\sqrt{2}}{-\mu}\right)^{1-1/\mu}\tau^{-1/\mu}.$$
 For every $\tau>0$, this expression converges to $-\sqrt{2}$. Moreover, $x_\tau$ is uniformly bounded on $(0,\epsilon)$ for all sufficiently negative $\mu$. Therefore, by the dominated convergence theorem, $x_\tau\to-\sqrt{2}$ in $\mathcal{L}^p(0,\epsilon)$, for every $1\leq p<\infty$.
 
 We next express $\phi$ in terms of $\tau$. From the first integral \eqref{EL}, \eqref{fproof}, and $\tau=1/f$, we obtain
 $$\tau_s=-\tau\sqrt{1-x^2+\tau^2}\,,$$
 since $\tau$ is decreasing on the branch $s>0$. On the other hand,
 $$\phi_s=-\frac{x\tau}{1+\tau^2}\,.$$
 Combining both identities, we have
 $$\phi_\tau=\frac{x}{(1+\tau^2)\sqrt{1-x^2+\tau^2}}\longrightarrow \widetilde{\phi}_\tau=-\frac{\sqrt{2}\,\tau}{(1+\tau^2)\sqrt{1-\tau^2}}\,,$$
 uniformly on $[0,\epsilon]$. Here, $\epsilon>0$ is chosen sufficiently small so that $1-x^2+\tau^2$ is uniformly bounded away from zero on $[0,\epsilon]$.
 
 Differentiating the expression for $\phi_\tau$ and simplifying, we get
 $$\phi_{\tau\tau}=\frac{x_\tau}{(1-x^2+\tau^2)^{3/2}}-\frac{\tau x(3-2x^2+3\tau^2)}{(1+\tau^2)^2(1-x^2+\tau^2)^{3/2}}\,.$$
 Therefore, the uniform convergence of $x$, together with the convergence of $x_\tau$ in $\mathcal{L}^p(0,\epsilon)$, gives
 $$\phi_{\tau\tau}\longrightarrow \widetilde{\phi}_{\tau\tau}=\left(-\frac{\sqrt{2}\,\tau}{(1+\tau^2)\sqrt{1-\tau^2}}\right)_\tau,$$
 in $\mathcal{L}^p(0,\epsilon)$, for every $1\leq p<\infty$.
 
 At the fixed value $\tau=\epsilon$, the convergence $\phi(\epsilon)\to\widetilde{\phi}(\epsilon)$ follows from the smooth convergence away from the endpoints. Moreover, for every $0\leq \tau\leq \epsilon$,
 \begin{eqnarray*}
 	\lvert \phi(\tau)-\widetilde{\phi}(\tau)\rvert &=&\lvert \phi(\epsilon)+\int_\epsilon^\tau \phi_\tau\,d\sigma-\widetilde{\phi}(\epsilon)-\int_\epsilon^\tau \widetilde{\phi}_\tau\,d\sigma\rvert\\
 	&\leq&\lvert \phi(\epsilon)-\widetilde{\phi}(\epsilon)\rvert+\epsilon\lVert \phi_\tau-\widetilde{\phi}_\tau\rVert_{\mathcal{L}^\infty(0,\epsilon)}.
 \end{eqnarray*}
 Hence, $\phi\to\widetilde{\phi}$ uniformly on $[0,\epsilon]$. Together with the uniform convergence of $\phi_\tau$ and the convergence of $\phi_{\tau\tau}$ in $\mathcal{L}^p(0,\epsilon)$, this gives $\phi\to\widetilde{\phi}$ in $\mathcal{W}^{2,p}(0,\epsilon)$, for every $1\leq p<\infty$.
 
 On the limiting branch $s>0$, we have $\tau={\rm sech}\,s$ and, hence, $\tanh s=\sqrt{1-\tau^2}$. Therefore, after straightforward simplifications
 $$\gamma_{\mu,c}(\tau)\longrightarrow \frac{e^{\widetilde{\phi}(\tau)}}{\sqrt{1+\tau^2}}\left(1,0,\tau\right)=\frac{1}{\sqrt{2}-\sqrt{1-\tau^2}}\left(1,0,\tau\right),$$
 in $\mathcal{W}^{2,p}(0,\epsilon)$, for every $1\leq p<\infty$. The branch $s<0$ is treated similarly, but now $\tanh s=-\sqrt{1-\tau^2}$, and so
 $$\gamma_{\mu,c}(\tau)\longrightarrow\frac{1}{\sqrt{2}+\sqrt{1-\tau^2}}\left(1,0,\tau\right).$$
 These are the two endpoint parameterizations of the limiting semicircle. Together with the smooth convergence away from the endpoints, this proves that the generalized elastic curves $\gamma_{\mu,c}$ converge in $\mathcal{W}^{2,p}$, for every $1\leq p<\infty$, up to their endpoints, to the upper semicircle of radius one centered at $(\sqrt{2},0,0)$.
\end{proof}

The convergence established in Lemmas \ref{lemma1} and \ref{lemma2} is illustrated in Figure \ref{F3}, which shows the generating curves corresponding to the rotational tori in Figure \ref{F0}. In the Poincar\'e disk model, this figure also illustrates how the generalized elastic curves evolve toward a hyperbolic geodesic, represented by an arc of a Euclidean circle meeting the ideal boundary orthogonally.

\begin{figure}[h!]
	\centering
	\includegraphics[height=5.58cm]{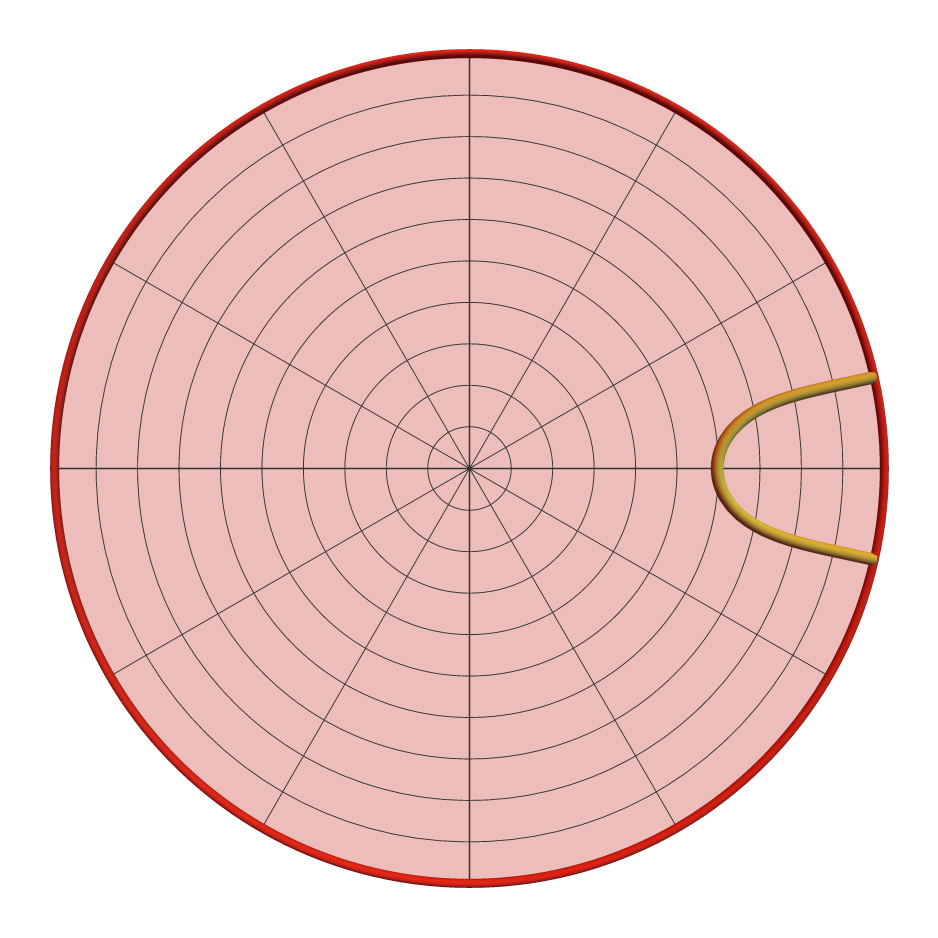}
	\includegraphics[height=5.58cm]{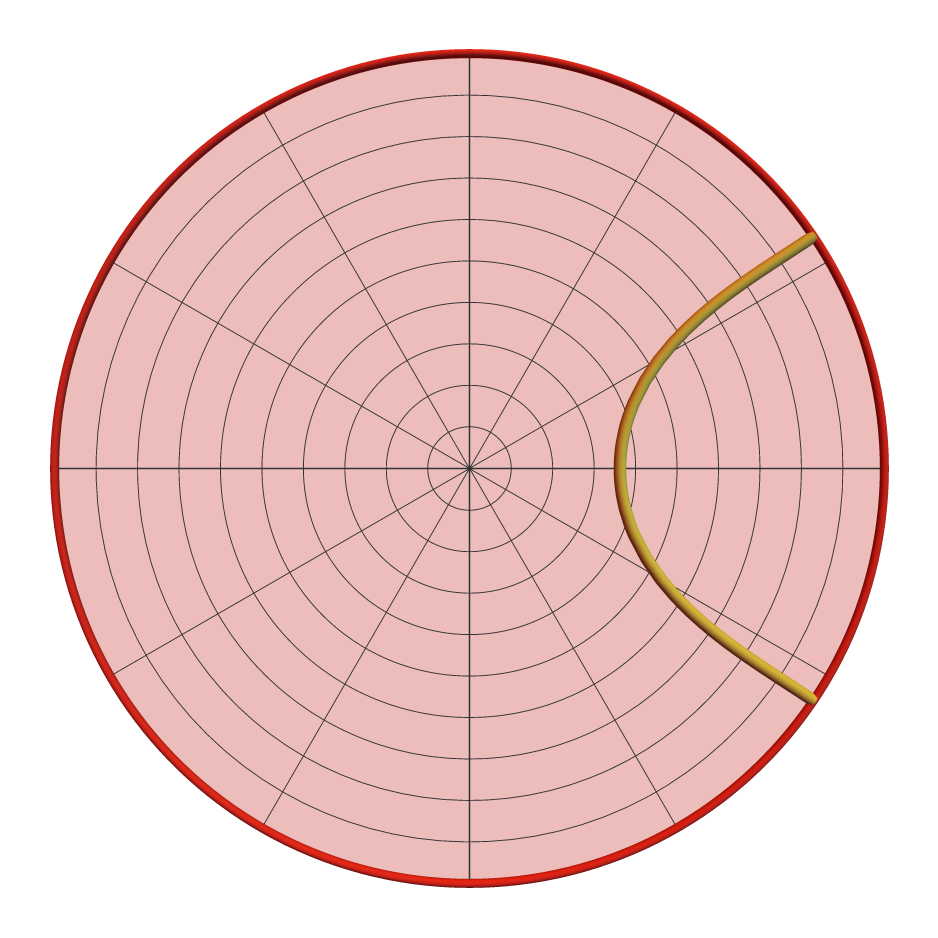}
	\includegraphics[height=5.58cm]{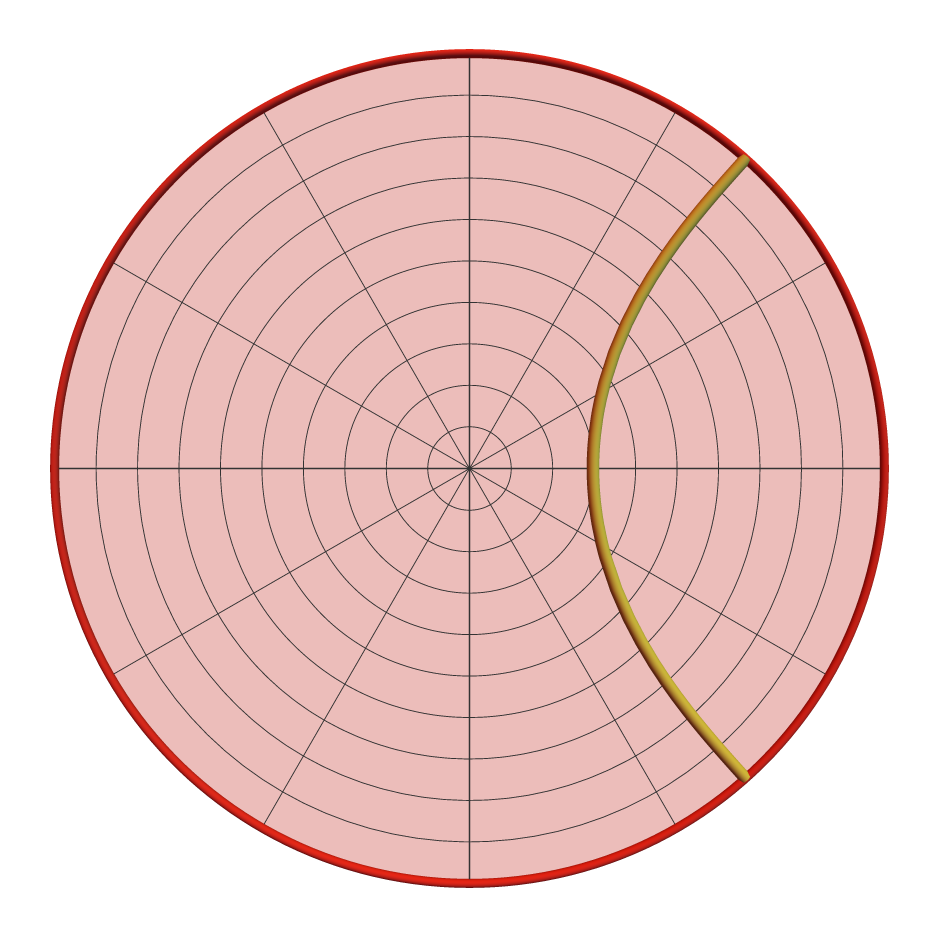}
	\caption{\small{Generalized elastic curves $\gamma_{\mu,c}$ in the Poincar\'e disk model of $\mathbb{H}^2$, with $c=-\mu/\sqrt{2}$ and, from left to right, $\mu=-0.6$, $\mu=-2$, and $\mu=-6$. These curves generate the rotational tori displayed in Figure \ref{F0} and converge, as $\mu\to-\infty$, to the limiting curve $\widetilde{\gamma}$ described in Lemmas \ref{lemma1} and \ref{lemma2}.}}
	\label{F3}
\end{figure}

The preceding lemmas show that, when viewed as Euclidean curves in the closed upper half-plane, the generalized elastic curves converge smoothly away from their endpoints and in $\mathcal{W}^{2,p}$ up to their endpoints for every $1\leq p<\infty$. This convergence cannot be strengthened to $\mathcal{W}^{2,\infty}$, as can be seen from the behavior of their Euclidean curvatures at the endpoints. The next lemma makes this obstruction precise.

\begin{lemma}\label{lemma3}
	Let $\mu<0$ and $c=-\mu/\sqrt{2}$. As $\mu\to-\infty$, the Euclidean curvatures of the generalized elastic curves $\gamma_{\mu,c}$ at their endpoints converge to $\sqrt{2}-1$ and $\sqrt{2}+1$, respectively, whereas the limiting semicircle has Euclidean curvature one. Consequently, the curves $\gamma_{\mu,c}$ do not converge in $\mathcal{W}^{2,\infty}$ up to their endpoints.
\end{lemma}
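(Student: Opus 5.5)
The plan is to compute the Euclidean curvature at the endpoints directly from the endpoint parameterization \eqref{gammatau} used in the proof of Lemma \ref{lemma2}. Near each endpoint the branch is written as
$$\gamma_{\mu,c}(\tau)=\frac{e^{\phi(\tau)}}{\sqrt{1+\tau^2}}\,(1,0,\tau)\,,\qquad 0\leq\tau\leq\epsilon\,,$$
with the endpoint at $\tau=0$. The Euclidean curvature of a planar curve $(X(\tau),Z(\tau))$ is $\lvert X'Z''-Z'X''\rvert/(X'^2+Z'^2)^{3/2}$. So it suffices to know $\phi(0)$, $\phi_\tau(0)$ and $\phi_{\tau\tau}(0)$ for each fixed $\mu$, and then pass to the limit $\mu\to-\infty$.

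\textbf{Step 1: the derivatives of $\phi$ at $\tau=0$ for fixed $\mu$.} From \eqref{x} we have $x\propto\tau^{1-1/\mu}$, so $x(0)=0$. The formula for $\phi_\tau$ in the proof of Lemma \ref{lemma2} then gives $\phi_\tau(0)=0$.

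For the second derivative we use $x_\tau\propto\tau^{-1/\mu}$. Since $-1/\mu>0$, this tends to $0$ as $\tau\to0^+$ and is continuous on $[0,\epsilon]$. The expression for $\phi_{\tau\tau}$ in that proof therefore shows that $\phi_{\tau\tau}$ is continuous up to $\tau=0$, with $\phi_{\tau\tau}(0)=x_\tau(0)=0$.

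\textbf{Step 2: the endpoint curvature for fixed $\mu$.} Expanding to second order, the curve is
$$e^{\phi(0)}\left(1-\tfrac{\tau^2}{2},\,\tau\right)+o(\tau^2)\,,$$
because the factor $e^{\phi}$ contributes no first- or second-order terms. Plugging this into the curvature formula gives Euclidean curvature exactly $e^{-\phi(0)}$ at the endpoint. As a check, $e^{\phi(0)}$ is the $x$-coordinate of the endpoint, so this is the reciprocal of the endpoint's distance to the axis.

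The limiting semicircle behaves differently. There $\widetilde\phi_\tau=-\sqrt2\,\tau/((1+\tau^2)\sqrt{1-\tau^2})$, so $\widetilde\phi_{\tau\tau}(0)=-\sqrt2\neq0$. This nonzero second derivative is exactly what restores curvature one for the semicircle.

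\textbf{Step 3: passing to the limit $\mu\to-\infty$.} By Lemma \ref{lemma2}, $\phi\to\widetilde\phi$ uniformly on $[0,\epsilon]$, so $\phi(0)\to\widetilde\phi(0)$. The limiting endpoint parameterizations in that proof give
$$e^{\widetilde\phi(0)}=\frac{1}{\sqrt2-1}=\sqrt2+1 \ \text{ on the branch } s>0\,,\qquad e^{\widetilde\phi(0)}=\frac{1}{\sqrt2+1}=\sqrt2-1 \ \text{ on the branch } s<0\,.$$
Hence the endpoint curvatures converge to $\sqrt2-1$ and $\sqrt2+1$, respectively.

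\textbf{Step 4: failure of $\mathcal{W}^{2,\infty}$ convergence.} Suppose, for contradiction, that $\gamma_{\mu,c}\to\widetilde\gamma$ in $\mathcal{W}^{2,\infty}(0,\epsilon)$. Since first derivatives already converge uniformly, the curvatures $\bar\kappa_\mu$ would converge to $\bar\kappa=1$ in $\mathcal{L}^\infty(0,\epsilon)$.

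For each fixed $\mu$, $\bar\kappa_\mu$ is continuous on $[0,\epsilon]$ by Step 1. Continuity lets the essential supremum over $(0,\epsilon)$ control the value at $\tau=0$, so
$$\lVert\bar\kappa_\mu-1\rVert_{\mathcal{L}^\infty(0,\epsilon)}\ \geq\ \lvert\bar\kappa_\mu(0)-1\rvert\ \longrightarrow\ \sqrt2-1>0\,,$$
which is a contradiction.

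The main obstacle I expect is Step 1: justifying rigorously that $\phi_{\tau\tau}$, and hence the curvature, extends continuously to $\tau=0$ for each fixed $\mu$. This is what makes the endpoint value meaningful and lets the $\mathcal{L}^\infty$ norm detect it. I would handle it using the explicit power $\tau^{-1/\mu}$ in $x_\tau$ together with the fact that $1-x^2+\tau^2$ stays bounded away from zero on $[0,\epsilon]$, as already arranged in Lemma \ref{lemma2}.
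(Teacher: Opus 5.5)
Your proposal is correct and follows the paper's route. You use the endpoint parameterization \eqref{gammatau}, deduce $\phi_\tau(0)=\phi_{\tau\tau}(0)=0$ from $x(0)=x_\tau(0)=0$ so that the endpoint curvature equals $e^{-\phi(0)}$, pass to the limit via the uniform convergence of $\phi$ from Lemma~\ref{lemma2}, and rule out $\mathcal{W}^{2,\infty}$ convergence because the curvature is continuous up to $\tau=0$.

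The differences are only cosmetic: you use a second-order Taylor expansion instead of the closed formula $e^{-\phi(0)}\lvert 1-\phi_{\tau\tau}(0)\rvert$, and you add a check that $\widetilde\phi_{\tau\tau}(0)=-\sqrt2$ on the branch $s>0$ recovers curvature one. In Step 4, state explicitly, as the paper does, that the speeds stay bounded away from zero near $\tau=0$ (they tend to $e^{\phi(0)}>0$); this is needed for $\mathcal{W}^{2,\infty}$ convergence to force $\mathcal{L}^\infty$ convergence of the curvatures.
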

\begin{proof}
	Recall that, near either endpoint, the curves $\gamma_{\mu,c}$ can be parameterized as \eqref{gammatau}. From the formulas obtained in the proof of Lemma \ref{lemma2}, we have $x(0)=0$, $x_\tau(0)=0$, and so $\phi_\tau(0)=0$ and $\phi_{\tau\tau}(0)=0$.
	
	From the standard formula for the Euclidean curvature of a plane curve,
	$$\overline{\kappa}(0)=e^{-\phi(0)}\lvert 1-\phi_{\tau\tau}(0)\rvert=e^{-\phi(0)}.$$
	On the branch $s>0$, Lemma \ref{lemma2} gives $e^{\phi(0)}\to\sqrt{2}+1$. Hence, $\overline{\kappa}(0)\to\sqrt{2}-1$. On the branch $s<0$, we similarly have $e^{\phi(0)}\to\sqrt{2}-1$ and therefore $\overline{\kappa}(0)\to\sqrt{2}+1$.
	
	The limiting semicircle has radius one and thus Euclidean curvature one at both endpoints. Therefore, the endpoint curvatures of $\gamma_{\mu,c}$ do not converge to those of the limiting semicircle. If the curves converged in $\mathcal{W}^{2,\infty}$ up to their endpoints, then, since the parameterizations remain regular there, their Euclidean curvatures would converge in $\mathcal{L}^\infty$. As these curvatures are continuous up to the endpoints, the endpoint mismatch rules this out. Hence, the curves $\gamma_{\mu,c}$ do not converge in $\mathcal{W}^{2,\infty}$ up to their endpoints.
\end{proof}

\bigskip

\begin{flushleft}
	\'Alvaro P{\footnotesize \'AMPANO}\\
	Department of Mathematics and Statistics, Texas Tech University, Lubbock, TX, 79409, USA\\
	E-mail: alvaro.pampano@ttu.edu
\end{flushleft}

\begin{flushleft}
	Aaron J. T{\footnotesize YRRELL}\\
	Department of Mathematics, University of Notre Dame, Notre Dame, IN 46556, USA\\
	E-mail: atyrrell@nd.edu
\end{flushleft}

\end{document}